  \pgfplotsset{compat=newest}
\newtheorem{lemma}{Lemma}
\newtheorem{assumption}{Assumption}
\newtheorem{proposition}{Proposition}
\newtheorem{theorem}{Theorem}
\newtheorem{definition}{Definition}
\newtheorem{remark}{Remark}
\newtheorem{axiom}{Axiom}
 \title{Optimal Resource Procurement and \\ the  Price of Causality}	
\author{Sen Li$^\dagger$, Akhil Shetty$^\star$, Kameshwar Poolla$^\star$, and Pravin Varaiya$^\star$

 \thanks{This research is supported by the National Science Foundation under grants EAGER-1549945 and CPS-1646612, and by the National Research Foundation of Singapore under a grant to the Berkeley Alliance for Research in Singapore. The conference version of this paper was presented at 2018 European Control Conference \cite{shetty2018optimal}.  }
 \thanks{$^\dagger$The Department of Mechanical Engineering, University of California, Berkeley, USA. Email: lisen1990@berkeley.edu}
  \thanks{$^\star$The Department of Electrical Engineering and Computer Science, University of California, Berkeley, USA. Email: \{shetty.akhil, poolla, varaiya\}@berkeley.edu}

}
\begin{document}
\maketitle
\begin{abstract}
This paper studies the problem of procuring diverse resources in a forward market to cover a set $\bf{E}$ of uncertain demand signals $\bf{e}$. We consider two scenarios: (a) $\bf{e}$ is revealed all at once by an oracle (b) $\bf{e}$ reveals itself causally. 
Each scenario induces an optimal procurement cost. The ratio between these two costs is defined as the {\em price of causality}. It captures the additional cost of not knowing the future values of the uncertain demand signal. We consider two application contexts:  procuring energy reserves from a forward capacity market, and purchasing virtual machine instances from a cloud service. An upper bound on the price of causality is obtained, and the exact price of causality is computed for some special cases. The algorithmic basis for all these computations is set containment linear programming.  A mechanism is proposed to allocate the procurement cost to consumers who in aggregate produce the demand signal. We show that the proposed cost allocation is fair, budget-balanced, and respects the cost-causation principle. The results are validated through numerical simulations.

\end{abstract}
\section{Introduction}
Many  complex systems consist of both controllable and uncontrollable resources. Examples range from  power systems, computing systems, transportation systems, among others.  Uncontrollable resources inject uncertainties in the system, which collectively generate an uncertain demand signal that needs to be balanced by the controllable resources. We investigate optimal resource procurement necessary to balance the uncertain aggregate demand signal.  
One example is that of a grid operator that needs to procure energy reserves from the forward capacity market. He can choose from diverse resources, such as batteries, generators, aggregation of loads, etc. These resources are used to cover imbalances between electricity supply and demand for the grid. 
Another example is that of a company buying virtual machines from a cloud provider to serve computational demands, which are typically heterogeneous.  The common thread that binds these examples is the {\em ex-ante} procurement of diverse resources to cover an uncertain signal revealed in real-time.

What is the optimal resource asset mix that covers all uncertain demand signals $e \in E$? The purchase decision depends on the unit prices of the resources, their dynamic constraints, and critically on the control strategy that allocates the signal to the procured resources. The resource procurement decision is therefore intimately coupled with the real-time control strategy associated with allocation. Since the signal reveals itself in real-time, the allocation policy needs to be causal. The operator needs to irrevocably  commit procured resources to match the uncertain demand $e$ without the luxury of knowing its future values. Clearly, the resource procurement cost under causal policies will be higher than that under arbitrary(possibly non-causal) policies. This inspires us to quantify the effect of causality on  optimal resource procurement cost. 

A second issue is that of paying for the procurement cost. Ideally, the cost allocation should be fair, budget-balanced, and follow the cost causation principle. This principle enunciates that agents are penalized (rewarded) in proportion to their contribution (mitigation) of the need to procure balancing resources. We explore cost allocation mechanisms that satisfy these requirements.

\subsection{Our Contribution}
We formulate optimal resource procurement as a set containment problem. In particular, consider an operator that  procures diverse resources to  cover a sequence of uncertain signals revealed over a delivery window of length $T$.  Each resource has linear dynamic constraints, which can be modeled as a convex set in $R^T$. Uncertain signals are modeled as belonging to a specified convex set $E$. The operator needs to determine  the optimal resource mix to collectively cover all signals in $E$. The principle contributions of the paper are:
\begin{itemize}
\item We define the  {\em price of causality} (PoC). It is the ratio of the optimal procurement cost under  causal policies to that under  arbitrary (possibly non-causal) policies.  It quantifies the additional cost for not knowing the information in the future.

\item We derive an upper bound on the price of causality. The algorithmic basis of this computation is set containment linear programming.

\item We obtain the exact price of causality for some special cases. Through these cases, we show that {\em dynamics} and {\em diversity} are the main factors that drive the price of causality to be greater than 1.

\item A cost allocation mechanism is proposed. It is fair, budget balanced, and respects the cost causation principle. 

\item We illustrate our framework in two application contexts:  procuring energy reserves from forward capacity markets, and buying virtual machines from cloud providers.

\end{itemize}

\subsection{Related Work}
A closely related problem is online optimization \cite{borodin2005online}, \cite{buchbinder2009design}. It studies sequential decisions made irrevocably at each time step without access to future information. This problem is widely studied in many areas, such as stochastic dynamic programs \cite{brown2014information}, communication networks \cite{mao2016optimal}, online allocation \cite{hao2017online}, \cite{bhalgat2012online}, load balancing \cite{lin2012online}, among others. A standard measure to evaluate the performance of an online solution is the competitive ratio \cite{borodin2005online}. This compares the performance of the optimal online algorithm, where all information is revealed causally, to the performance of the optimal offline algorithm, which is an unrealizable algorithm with complete information about the future.
Online optimization problems consider the available resources to be fixed and aim to find the optimal causal decisions that minimize \emph{real-time costs}. Our problem is distinct. We consider resource procurement problems that minimize ex-ante \emph{capacity cost}. 
This distinction differentiates the price of causality from the competitive ratio.

Aside from online optimization, another strand of related work studies the adequacy of resources in real-time decision making. For instance, Dertouzos {\em et al.} studies the online processor time allocation problem in \cite{dertouzos1989multiprocessor}, and shows that optimal scheduling is impossible without a priori knowledge on the start times of tasks.  Subramanian {\em et al.} considers a real-time scheduling problem for distributed energy resources to reduce the grid energy cost \cite{subramanian2013real}. Wenzel {\em et al.} studies real-time charging strategies for electric vehicles to provide ancillary services with minimum tracking error \cite{wenzel2017real}.  Madjidian {\em et al.} discovers the trade-off between absorbing and releasing energy for collective loads under causal allocation policies \cite{madjidian2017energy}. In all of these works, the quantity of available resources is fixed. Their focus is on analysis of causal policies. In contrast, we investigate optimal resource procurement to meet worst case adequacy.   

The closest related works are  
 \cite{nayyar2016duration} and \cite{negrete2016rate}. Negrete-Pincetic {\em et al.} considers a supplier who owns uncertain renewable generations and also purchases energy in day-ahead and real-time markets to serve the deferrable loads \cite{negrete2016rate}. They show that the optimal procurement costs under causal allocation policy and offline allocation policy are the same, i.e., the price of causality is 1. In \cite{nayyar2016duration} and \cite{negrete2016rate},  the supplier can purchase additional energy from the real-time markets when the day-ahead procurement is not enough. This is distinct from our problem, where  all procurement decisions are made in advance, and no recourse is available in real time.

The remainder of this paper is organized as follows. The resource procurement problem is formulated in Section II, followed by some  examples in Section III.  Section IV and Section V present an upper bound for the price of causality and study some special cases. Section VI discusses the cost allocation mechanism, followed by numerical studies in Section VII. Concluding remarks and future directions are offered in Section VIII.

\subsection{Notation}

Throughout the paper, $\mathbb{R}$ denotes the set of real numbers. $N$ denotes the total number of resources. $T$ denotes the time horizon. For any positive integer $Z$, $[Z]$ denotes the set $\{1, \dots, Z\}$. If $\delta\in \mathbb{R}$ and $A$ is a set, then $\eta A=\{\eta a |a\in A \}$. For two sets $A$ and $B$,   $A=B$ means $A\subseteq B$ and $A \supseteq B$. $A \oplus B$ denotes their Minkowski sum, i.e., $A\oplus B=\{a+b| a\in A, b\in B\}$.
\section{Optimal Resource Procurement}

\subsection{Problem Setup}
Consider two types of resources: controllable resources, and uncontrollable resources. Uncontrollable resources generate an uncertain demand signal, which must be balanced by controllable resources over a delivery window. We segment this delivery window into $T$ contiguous periods, and denote $e=(e^1,\ldots,e^T)$ as the demand signal over these periods. We model the uncertain demand signal $e$ as being contained in the set $E\subset \mathbb{R}^T$, and make the following assumption:
\begin{assumption}
\label{asusmptiononpolytope}
$E$ is a bounded convex polytope in $\mathbb{R}^T$. 
\end{assumption}
In many applications, the demand signal $e$ is modeled stochastically. In our case, the polytope $E$ can be interpreted as the support of the distribution of $e$, or as the confidence interval such that $e\in E$ with probability $1-\epsilon$. In the case study section, we will give an example of how to derive $E$ based on real data.

To balance the uncertain demand signal, an operator chooses from a group of $N$ controllable resources. These resources have diverse prices and dynamic constraints. Resource $i$ can generate a sequence of outputs over the horizon $[1, \ldots, T]$, denoted as $s_i=(s_i^1,\ldots,s_i^T)$.  Let  $s_i\in S_i$,  where $S_i$ is the set of all possible output sequences constrained by the resource dynamics. We make the following assumption:
\begin{assumption}
\label{asusmptiononresource}
$S_i$ is a bounded and convex polytope in $\mathbb{R}^T$, and $0$ is in the interior of $S_i$ for all $i\in [N]$.
\end{assumption}
Assumption \ref{asusmptiononresource} holds if the  dynamic constraints of the resources are linear.  Note that $0\in \text{int }(S_i)$ trivially holds as the  resource can be idle over the delivery window. 

We refer to $S_i$ as a {\em unit resource}. Unit resource   $S_i$ is offered at price $\pi_i$.  If the operator purchases $\alpha_i$ units of resource $i$, he pays $\alpha_i\pi_i$, and has the right to command any signal $s_i\in \alpha_iS_i$ in the delivery window.

The time-line of the problem is shown in Figure \ref{fig:timeline}. At time $t=0$, the operator purchases the minimum-cost asset mix of controllable resources in a forward market.  During the delivery window $[1, \ldots, T]$, the time sequence of the demand signal $e$ is revealed causally (one sample at a time), and the operator dispatches procured resources to match the demand signal. The operator pays a capital cost for the procured resources, but does not pay for subsequent use of these resources during dispatch. 

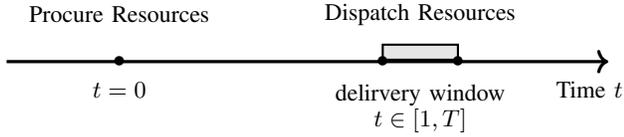
\begin{figure}[t]
\centering
\begin{tikzpicture}[scale=0.5]
{\small
\draw [->,very thick,black] (0,-0.25) --  (16,-0.25);
\node [align=center] at (15.5,-1) {Time $t$};
\node [align=left] at (3.0,1) {Procure Resources}; 
\node [align=center] at (3,-1) {$t=0$}; 
\node [align=center] at (11,1) { Dispatch Resources}; 
\node [align=center] at (11,-1.5) {delirvery window \\
$t\in [1,T]$};  
\node [align=center] at (3,-0.25) {$\bullet$};
\node [align=center] at (10,-0.25) {$\bullet$};
\node [align=center] at (12,-0.25) {$\bullet$};}
\draw [black, fill=black, fill opacity = 0.1, thick] (10,-0.2) rectangle (12,0.2);
\end{tikzpicture}
\caption{Time line of the resource procurement problem.} \label{fig:timeline}
\end{figure}

The optimal resource procurement problem is:
\begin{equation}
\label{resourceprocurement}
\hspace{-2.5cm} J^*=\min_{\alpha_1,\ldots,\alpha_N} \sum_{i=1}^{N} \alpha_i \pi_i
\end{equation}
\begin{subnumcases}{\label{constraint4non_causal}}
E \subseteq \alpha_1 S_1 \oplus \alpha_2 S_2 \oplus \cdots \oplus \alpha_N S_N, \label{polytopecontain} \\
\alpha_i\geq 0,  \quad \forall i\in [N].
\end{subnumcases}
The polytope containment constraint (\ref{polytopecontain}) requires all demand signals $e\in E$ be covered by the procured resource mix. Note that (\ref{resourceprocurement}) always has a solution as $0$ is an interior point of $S_i$ (see Assumption \ref{asusmptiononresource}).

\subsection{Energy Reserve Procurement}
\label{energyprocurementsection}

Consider a forward reserve market, where a system operator procures energy reserves to balance the supply and demand in electricity. We categorize the assets in power systems as controllable resources and uncontrollable resources. These resources are modeled as follows:

\subsubsection{Controllable Resources}
Controllable resources include generators, batteries, aggregation of thermostatically controlled loads \cite{hao2015aggregate}, \cite{zhao2017geometric}, among others.  They can provide electricity on demand to balance supply and demand. 
Consider $N$ types of controllable resources in a forward reserve market. Resource $i$ can produce a power sequence $s_i$ during the delivery window $[1, \ldots, T]$. The sequence  $e_i$ is confined by the dynamics. Assume the dynamic constraints of each resource are linear, then $S_i$ is a polytope.  
As an example, consider a battery with capacity constraint $C_i$, charge rate constraint $\bar{r}_i$ and discharge rate constraint $\underline{r}_i$. The power sequence $s_i$ is constrained by:
\begin{align}
\label{batterymodel}
\begin{cases}
\underline{r}_i\leq s_i^t \leq \bar{r}_i, \quad \forall t\in [T],  \nonumber \\
0\leq \theta_iC_i+\sum_{k=1}^t s_i^k\leq C_i, \quad \forall t\in [T], i\in [N], 
\end{cases}
\end{align}
where $\theta_i$ is the initial state of charge. It can be easily verified that $S_i$ is a polytope. It satisfies Assumption \ref{asusmptiononresource}.

\subsubsection{Uncontrollable Resources}
Examples of uncontrollable resources include wind farms, solar panels, and random loads. These resources cannot be dispatched by the system operator. They inject uncertainty to the system, and create imbalances between the supply and demand. 
 
Consider a two-settlement electricity market that consists of a day-ahead market and a real-time market. Each uncontrollable resource trades in the day-ahead market based on the forecast of electricity consumption (production). The forecast error creates an imbalance between supply and demand in the real-time market.  We model the imbalance signal as a vector $e\in \mathbb{R}^T$, and assume it takes values in  a polytope $E\subset \mathbb{R}^T$. It can be viewed as support of the distribution of $e$, or a confident interval so that $e\in E$ with high probability.

\subsubsection{Balancing}

The reserve procurement problem is to determine the asset mix of controllable resources so that all  $e\in E$ are covered. This reduces to an instance of (\ref{resourceprocurement}).

\subsection{Cloud Computing}
Consider a company that procures virtual machine {\em instances} from a cloud provider (such as Amazon EC2) to serve workloads from users. Two types of workloads are considered: (a) transactional workloads, (b) non-interactive batch workloads.  Transactional workloads such as web applications are highly unpredictable and require immediate response. On the other hand,  non-interactive batch workloads can be predicted and only require completion within a specified time frame \cite{garg2011sla}. For example, a financial institution uses transactional workloads to trade stocks and query indices, and uses non-interactive workloads to analyze investment portfolio and model stock performance \cite{carrera2008enabling}. 

The company purchases {\em on-demand instances} from cloud providers to serve these workloads. On-demand instances can be requested at any time. It is charged in a pay-as-you-go manner with a specified {\em price per unit time}. For instance, Amazon ECS adopts a price-per-hour policy that rounds up partial hours of usage \cite{amazonec2}. Note that new instances cannot be initialized instantaneously. Typically, there is a delay of several minutes \cite{li2010cloudcmp} due to hardware resource allocation and the boot of new systems. Therefore, the company should procure enough instances in advance, instead of reactively purchasing instances after the workload arrives. This motivates the following problem: how many instances are enough to serve all workloads over a  horizon (e.g., 20 minutes)?

To tackle this problem, we model the instances,  non-interactive workloads  and transactional workloads as follows:

\noindent{Instances:} without loss of generality, we assume all instances are homogeneous, and the length of each period is $1$. We model an instance as a unit resource. The output of the resource is $s_1^t$, which represents the fraction of time the instance works during the period $t$. Accordingly, $s_1^t$ satisfies the following constraint:
\begin{equation}
\label{feasibleinstance}
0\leq s_1^t \leq 1, \quad \forall t\in [T],
\end{equation}
Let $s_1=(s_1^1,\ldots,s_1^T)$, and define $S_1=\{s_1|0\leq s_1^t \leq 1, \forall t\in [T]\}$.  Clearly, $S_1$ is a hyper-box.

\noindent{Non-Interactive Batch Workloads:} we model the collection of non-interactive workloads as a non-scalable resource.  In particular, consider $M$ non-interactive workloads. Each of them has an arrival time $a_m$, a deadline $d_m$ and a computation time $c_m$, such that $c_m\leq d_m-a_m$. Assume the workloads can be interrupted, and  each workload can be assigned to at most one instance at a time. Let $r_m^t$ denote the fraction of instance computation time allocated to workload $m$ over the period $t$, then we have $0\leq r_m^t \leq 1$.  Denote $s_2^t$ as the total fraction of instance computing time {\em provided} by all batch workloads. Since the workloads {\em demand} computing resources, $s_2^t$ is negative, and satisfies the following constraints:
\begin{subnumcases}{\label{constraintofS4CC}}
s_2^t=-\sum_{m=1}^M r_m^t, \quad \forall t\in [T] \label{negativeconst} \\
\sum_{t=1}^T r_m^t =c_m,  \quad \forall m\in [M], \label{negativeconst2}\\
0\leq r_m^t\leq 1,  \quad\forall a_m\leq t \leq d_m, \\
r_m^t=0,  \quad \quad \forall t<a_m, \quad \forall t>d_m. \label{negativeconst4}
\end{subnumcases}
where $[M]=\{1,\ldots,M\}$, and (\ref{negativeconst2})-(\ref{negativeconst4}) ensures all workloads to be completed by the deadline. Let $s_2=(s_2^1,\ldots,s_2^T)$, and define $S_2$ as the set of $s_2$ that satisfies (\ref{constraintofS4CC}). It can be verified that $S_2$ is a bounded and convex polytope.

\noindent{Transactional Workloads:} we model transactional workloads as a sequence of uncertain signals. At time $t$, all transactional workloads together require $e^t$ amount of instance computation time. Let $e=(e^1,\ldots,e^T)$. Assume that $e$ only takes value in a polytope  $E \subset \mathbb{R}^T$. In this case, both $S_1$, $S_2$ and $E$ are polytopes. Assumption \ref{asusmptiononpolytope} and Assumption \ref{asusmptiononresource} are satisfied.

The resource procurement problem is as follows:
\begin{equation}
\label{resourceprocurementexmaplecc}
\hspace{-1cm} \min_{\alpha_1} \alpha_1
\end{equation}
\begin{subnumcases}{\label{constraint4non_causalexamplecc}}
E \subseteq \alpha_1 S_1 \oplus S_2  \label{polytopecontainexamplecc} \\
\alpha_1\geq 0.
\end{subnumcases}
where $S_1$ and $S_2$ are defined as (\ref{feasibleinstance}) and (\ref{constraintofS4CC}), respectively.

\section{The Oracle Case}
This section studies the optimal resource procurement problem (\ref{resourceprocurement}) under oracle information. We characterize the exact solution to (\ref{resourceprocurement})  as a linear program. We also pinpoint the difficulty of implementing this solution  due to the causal revelation of the uncertain demand $e$.

We first note that since $S_i$ is convex, the controllable resources cover all uncertain signals in $E$ if and only they cover all extreme cases of $E$. As $E$ is a polytope, these extreme cases correspond to its vertices.  Therefore, the set containment constraint (\ref{polytopecontain}) is equivalent to requiring that all vertices of $E$ be contained in the Minkowski sum  $\alpha_1S_1\oplus \cdots \oplus \alpha_NS_N$. If we represent $E$ as the convex hull of its vertices, i.e., $E= \text{conv }(v_1,\ldots,v_K)$, then (\ref{polytopecontain}) is equivalent to: 
\begin{equation}
\label{intermediate}
v_k\in \alpha_1 S_1 \oplus \alpha_2 S_2 \oplus \cdots \oplus \alpha_N S_N , \quad \forall  k\in [K].
\end{equation}
If we represent each resource $S_i$ as the intersection of half-spaces $S_i=\{s_i\in \mathbb{R}^T| A_is_i\leq B_i\}$,  the optimal resource procurement problem becomes:
\begin{theorem}
\label{noncausalresourceprocurement}
The resource procurement problem (\ref{resourceprocurement}) is equivalent to:
\begin{equation}
\label{resourceprocurement_linear}
\hspace{-4cm} \min \sum_{i=1}^{N} \alpha_i \pi_i
\end{equation}
\begin{subnumcases}{\label{constraint4causalcc}}
v_k=\sum_{i=1}^N q_{i,k}, \quad \forall k\in [K], \label{conservationofall_linearcc}\\
A_iq_{i,k}\leq \alpha_iB_i,  \quad \forall k\in [K], i\in [N], \label{feasibleset_icc}\\
\alpha_i\geq 0,  \quad \forall i\in [N].
\end{subnumcases}
The decision variables are $\alpha_1, \ldots, \alpha_N$ and $q_{i,k}\in \mathbb{R}^T$, $\forall i,k$.
\end{theorem}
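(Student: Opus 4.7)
The plan is to show that the feasible set of (\ref{resourceprocurement}) coincides with that of (\ref{resourceprocurement_linear})--(\ref{constraint4causalcc}), which, since the two problems share the same linear objective, will establish the claimed equivalence. I would chain together three reductions: set containment to vertex containment, vertex containment to the existence of a Minkowski decomposition, and membership in $\alpha_i S_i$ to the linear inequalities $A_i q_{i,k}\leq \alpha_i B_i$.

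First I would establish that (\ref{polytopecontain}) is equivalent to (\ref{intermediate}). The forward direction is immediate since the vertices of $E$ lie in $E$. For the converse, the Minkowski sum $M(\alpha):=\alpha_1 S_1 \oplus \cdots \oplus \alpha_N S_N$ is convex (as each $\alpha_i S_i$ is convex and Minkowski sums preserve convexity), and every $e\in E$ can be written as a convex combination $e=\sum_k \lambda_k v_k$ of the vertices. If each $v_k\in M(\alpha)$, then so is any convex combination of the $v_k$'s, giving $e\in M(\alpha)$. This step is standard but sets up the rest.

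Next, I would rewrite the vertex-containment condition $v_k\in M(\alpha)$ as the existence of vectors $q_{i,k}\in\mathbb R^T$ satisfying $q_{i,k}\in\alpha_i S_i$ for each $i$ and $v_k=\sum_{i=1}^N q_{i,k}$. This is just the definition of the Minkowski sum. The step that needs care is translating $q_{i,k}\in\alpha_i S_i$ into (\ref{feasibleset_icc}). When $\alpha_i>0$, membership in $\alpha_i S_i$ is equivalent to $q_{i,k}/\alpha_i\in S_i$, which, using the half-space representation $S_i=\{s:A_is\leq B_i\}$, is equivalent to $A_iq_{i,k}\leq \alpha_iB_i$.

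The main obstacle is the boundary case $\alpha_i=0$, where $\alpha_iS_i=\{0\}$ while the inequality $A_iq_{i,k}\leq 0$ merely constrains $q_{i,k}$ to the recession cone of $S_i$. Here I would invoke Assumption \ref{asusmptiononresource}: since $S_i$ is a bounded polytope, its recession cone $\{s:A_is\leq 0\}$ reduces to $\{0\}$, so $A_iq_{i,k}\leq 0$ forces $q_{i,k}=0$, restoring equivalence. Putting the three reductions together shows that $(\alpha_1,\ldots,\alpha_N)$ is feasible for (\ref{resourceprocurement}) if and only if there exist $q_{i,k}$'s making $(\alpha_i,q_{i,k})$ feasible for (\ref{resourceprocurement_linear})--(\ref{constraint4causalcc}), and the two optimal costs coincide since the objective depends only on $\alpha$. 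The resulting program is a linear program in the decision variables $(\alpha_i,q_{i,k})$, as claimed.
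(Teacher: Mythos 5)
Your proposal is correct and follows essentially the same route as the paper's proof: reduce set containment to vertex containment by convexity, unpack the Minkowski sum into the decomposition $v_k=\sum_i q_{i,k}$ with $q_{i,k}\in\alpha_i S_i$, and rewrite membership as $A_iq_{i,k}\leq\alpha_iB_i$. The only difference is that you explicitly justify the boundary case $\alpha_i=0$ via the triviality of the recession cone of a bounded polytope, a detail the paper passes over with ``clearly,'' so your write-up is if anything slightly more careful.
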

The proof is deferred to the Appendix. Theorem \ref{noncausalresourceprocurement} asserts that $J^*$ can be determined by solving  the linear program (\ref{resourceprocurement_linear}). This is a joint optimization over the resource asset mix $\alpha$ and  vertex factorizations $q_{i,k}$.

\begin{remark}
Polytopes can be characterized in two ways: intersection of half-spaces (H-representation)  or  convex hull of vertices (V-representation). These representations are equivalent. We have chosen a V-representation for the set $E$ and H-representation for sets $S_i$. We comment that the computational complexity of  (\ref{resourceprocurement}) crucially depends on these choices \cite{mangasarian2002set}.
\end{remark}
\section{The Price of Causality}

\subsection{Causality Matters}
The resource procurement problem (\ref{resourceprocurement}) embeds an underlying resource allocation problem. To realize the solution to (\ref{resourceprocurement}),  each uncertain signal $e\in E$ must be feasibly allocated to the procured resources during the delivery window, i.e., 
\begin{equation}
e=\sum_{i=1}^N s_i, \quad s_i\in \alpha_iS_i. 
\label{noncausalallocation}
\end{equation}
This is a factorization of $e$. It can be done if each element of the vector $e$ is known apriori at the beginning of the delivery window. However, the uncertain signal $e$ is revealed {\bf causally}. At each time $t$, the operator must irreversibly commit to a resource allocation of the sample $e^t$ without knowing future values.  We now argue that this is not always possible under the solution to (\ref{resourceprocurement}) prescribed in Theorem  \ref{noncausalresourceprocurement}.  

Consider two resources over three time periods, i.e., $N=2$ and $T=3$. Let the demand signal set $E=\text{conv}\{(0,0,0),(1,1,-2),(1,1,4)\}$.  The resources are batteries.  Battery $i$ has a capacity constraint $C_i$ and a maximum charge/discharge rate $r_i$. Unit battery $i$ has a price $\pi_i$. Let  
\begin{align*}
\begin{cases}
&C_1=C_2=3 \\
&r_1=3, r_2=1 \\
&\pi_1=3, \pi_2=1.
\end{cases}
\end{align*}
Assume all batteries are fully discharged at time $0$. Rhen, the feasible energy output $s_1\in S_1$ of the first battery satisfies the  linear constraints:
\begin{align}
\label{constraint4battery1}
\begin{cases}
0\leq s_1^1\leq 3; \\
0\leq s_1^1+s_1^2 \leq 3; \\
0\leq s_1^1+s_1^2+s_1^3 \leq 3, \\
-3\leq s_1^t\leq 3, \quad \forall t=1,2,3.
\end{cases}
\end{align}
The feasible energy output $s_2\in S_2$ of the second battery satisfies the  linear constraints:
\begin{align}
\label{constraint4battery2}
\begin{cases}
0\leq s_2^1\leq 3; \\
0\leq s_2^1+s_2^2 \leq 3; \\
0\leq s_2^1+s_2^2+s_2^3 \leq 3, \\
-1\leq s_2^t\leq 1, \quad \forall t=1,2,3.
\end{cases}
\end{align}
From (\ref{constraint4battery1}) and (\ref{constraint4battery2}), the controllable resource sets $S_i$ are bounded polytopes, and clearly, $0\in \text{int }(S_i)$.
The resource procurement problem without causality constraint is:
\begin{equation}
\label{resourceprocurementexmaple}
\hspace{-1cm} \min_{\alpha_1,\alpha_2} 3\alpha_1+\alpha_2
\end{equation}
\begin{subnumcases}{\label{constraint4non_causalexample}}
E \subseteq \alpha_1 S_1 \oplus \alpha_2 S_2  \label{polytopecontainexample} \\
\alpha_1\geq 0,  \alpha_2\geq 0,
\end{subnumcases}

We show in the Appendix that:
\begin{proposition}
\label{solutiontoexample}
The optimal resource procurement problem without causality constraint (\ref{resourceprocurementexmaple}) has a unique solution  $\alpha_1^*=\alpha_2^*=1$. This optimal asset mix is insufficient to causally cover the demand set $E$. 
\end{proposition}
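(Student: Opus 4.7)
The plan is to split the proof into two halves: optimality of $(\alpha_1,\alpha_2)=(1,1)$ for (\ref{resourceprocurementexmaple}), and causal infeasibility of this asset mix.

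For the optimality half, I would invoke Theorem \ref{noncausalresourceprocurement} to recast (\ref{resourceprocurementexmaple}) as an LP over $(\alpha_1,\alpha_2)$ and vertex factorizations $q_{i,k}$, with the three vertices of $E$ being $v_1=(0,0,0)$, $v_2=(1,1,-2)$, $v_3=(1,1,4)$. A lower bound on the cost follows from $v_3$ alone: the constraint $q_{1,3}^3+q_{2,3}^3=4$ combined with the rate bounds $q_{1,3}^3\leq 3\alpha_1$ and $q_{2,3}^3\leq \alpha_2$ forces $3\alpha_1+\alpha_2\geq 4$. Since $3\alpha_1+\alpha_2$ is precisely the objective, every feasible cost is at least $4$. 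I would then exhibit explicit factorizations achieving cost $4$ at $(1,1)$, for instance $v_3=(0,0,3)+(1,1,1)$ and $v_2=(0,2,-2)+(1,-1,0)$ with the first summand in $S_1$ and the second in $S_2$, and verify (\ref{constraint4battery1})--(\ref{constraint4battery2}) on each summand by inspection of its running sums and rates.

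For the causality half, I would argue by contradiction, exploiting that the segment from $v_2$ to $v_3$ lies entirely in $E$. Suppose some causal allocation policy $\pi$ suffices under $(1,1)$. Feed it a signal with $e^1=e^2=1$; by causality the resource outputs at $t=1,2$ are committed by the end of period $2$ and depend only on $(e^1,e^2)$. Set $c=s_1^1+s_1^2$ and $d=s_2^1+s_2^2$; the per-period matching condition $s_1^t+s_2^t=e^t$ yields $c+d=2$ with $c,d\in[0,3]$. At period $3$ the revealed $e^3$ may be any value in $[-2,4]$. Because the per-resource constraints decouple once $(c,d)$ is fixed, the maximum feasible $s_1^3+s_2^3$ equals $(3-c)+1=4-c$, so covering $e^3=4$ forces $c=0$ and hence $d=2$. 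But with $(c,d)=(0,2)$, the minimum feasible $s_1^3+s_2^3$ equals $-c+\max(-1,-d)=-1$, strictly greater than $-2$, so $e^3=-2$ cannot be matched. This contradicts the existence of $\pi$.

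The main obstacle is justifying that the extremes of $s_1^3+s_2^3$ are simultaneously realizable once $(c,d)$ is pinned down, which I would handle by observing that given $(c,d)$ the feasible ranges of $s_1^3$ and $s_2^3$ form independent intervals, so the extremes of the sum coincide with sums of individual extremes. A secondary subtlety is the claimed uniqueness of the optimum, which goes beyond the single inequality $3\alpha_1+\alpha_2\geq 4$; proving uniqueness would require examining the LP's active constraints for vertices other than $v_3$, though the causality part only needs $(1,1)$ to be \emph{some} optimal solution.
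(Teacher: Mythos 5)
Your proposal follows essentially the same route as the paper: a lower bound of $4$ extracted from the rate constraint at $t=3$ applied to the vertex $(1,1,4)$, explicit vertex factorizations showing $(1,1)$ is feasible, and causal infeasibility obtained by playing the two vertices $(1,1,-2)$ and $(1,1,4)$, which share their first two samples, against each other. Your causality argument is a slightly more careful version of the paper's: you parametrize by the stored energies $c=s_1^1+s_1^2$ and $d=s_2^1+s_2^2$, show that reaching $e^3=4$ forces $c=0$ and $d=2$, and conclude that the reachable minimum of $s_1^3+s_2^3$ is then $-1>-2$; the paper instead asserts directly that the period-$1,2$ allocation of $(1,1,4)$ is unique and observes the same obstruction. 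Your decoupling remark (that once $(c,d)$ is committed the feasible intervals for $s_1^3$ and $s_2^3$ are independent, so the extremes of the sum are sums of extremes) is exactly the justification needed, and it holds here because the remaining constraints at $t=3$ involve each battery separately.

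The one genuine gap is uniqueness, which you correctly flag as unproved. You should know that the paper does not close it either: it adds the capacity condition $3\alpha_1+3\alpha_2\geq 6$ and claims that $\min\,3\alpha_1+\alpha_2$ subject to $3\alpha_1+\alpha_2\geq 4$ and $3\alpha_1+3\alpha_2\geq 6$ has the unique optimum $(1,1)$, but the optimal set of that relaxation is the entire segment $\{(\alpha_1,4-3\alpha_1):0\leq\alpha_1\leq 1\}$. Indeed uniqueness fails for the original problem: $(\alpha_1,\alpha_2)=(0,4)$ also has cost $4$ and is feasible, since $4S_2$ is convex and contains all three vertices of $E$ (for $(1,1,4)$ the running sums are $1,2,6\leq 12$ and the rates satisfy $|s^t|\leq 4$; similarly for $(1,1,-2)$); the same holds for $(2/3,2)$ via the split $(1,1,4)=(0,0,2)+(1,1,2)$. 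So the part of the statement you could not prove is in fact not provable. Everything you did prove is correct, and for the purpose the proposition serves --- exhibiting an optimal non-causal asset mix that cannot be dispatched causally --- only the claims you established are needed.
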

This motivates us to incorporate causality constraints explicitly in  the resource procurement problem.

\subsection{Optimal Resource Procurement under Causality}

We require the following definitions:

\begin{definition}
A map $\phi: \mathbb{R}^T \rightarrow \mathbb{R}^T: (u^1, u^2, \ldots, u^T)\rightarrow (y^1, y^2, \ldots, y^T)$ is called causal if and only if:
\begin{align*}
 y^1 &= \text{function of } u^1, \\
 y^2 &= \text{function of } (u^1, u^2), \\
 \vdots \\
 y^T &= \text{function of } (u^1, \dots, u^T).
\end{align*}
\end{definition}

\begin{definition}
An allocation policy $\gamma=(\phi_1, \ldots, \phi_N)$ is a collection of maps $\phi_i : E \rightarrow \alpha_i S_i $ such that
\begin{align*}
\sum_i \phi_i(e) = e.
\end{align*}
\end{definition}
In other words, $\gamma$ allocates the uncertain demand signal $e$ to each of the procured resources $\alpha_i S_i$. The sum of these allocations  covers $e$. The policy $\gamma$ can be regarded as a factorization of the identify map $I$ as $\sum_{i\in [N]}\phi_i=I$. 

\begin{definition}
The allocation policy $\gamma$ is said to be causal if and only if its component maps $\phi_i$ are causal. 
\end{definition}

Let $\Gamma$ denote the set of all causal allocation policies. The optimal resource procurement problem under causal allocation can be cast as:

\begin{equation}
\label{resourceprocurement_causality}
\hspace{-1cm} J^{**}=\min_{\alpha_1,\ldots,\alpha_N, \gamma(\cdot)} \sum_{i=1}^{N} \alpha_i p_i
\end{equation}
\begin{subnumcases}
{\label{constraint4causal_formualtion}}
\phi_i(e)\in \alpha_i S_i \\
\sum_{i\in [N]} \phi_i(e)=e \label{polytopecontain_causal} \\
\gamma=(\phi_1,\ldots,\phi_N)\in \Gamma, \alpha_i\geq 0, \label{causal_policy}\\
\forall e\in E,  \label{robustness} 
\end{subnumcases}
where (\ref{polytopecontain_causal}) dictates that $\gamma$ is a factorization of demand signal $e$, (\ref{causal_policy}) restricts the allocation policy to be causal,  and (\ref{robustness}) requires that all signals in $E$ are covered.  This is a joint optimization over the asset mix $\alpha$ and the causal allocation policy $\gamma$.

\subsection{Price of Causality}
The resource procurement problem (\ref{resourceprocurement_causality}) reduces to (\ref{resourceprocurement}) if $\gamma(\cdot)$ is permitted to be non-causal. Therefore, the constraint (\ref{constraint4causal_formualtion}) is more restrictive than (\ref{constraint4non_causal}), and $J^{**}\geq J^*$.  Requiring that the allocation policy be causal inflates the optimal resource procurement cost from $J^*$ to $J^{**}$. We define the price of causality (PoC) as:
\begin{equation}
\label{definitionofPoC}
\text{PoC}=\dfrac{J^{**}}{J^*}.
\end{equation}
This captures the cost premium necessary for causal allocation. It is clear that $PoC \geq 1$. A large $PoC$ indicates that there is a large cost premium associated with causal allocation of the uncertain demand signals. Perhaps, a forward market for procuring reserves is not a suitable mechanism in such a case. On the other hand, $PoC \approx 1$ suggests that there is a minimal additional cost for not knowing the future values of the uncertain demand signal.

\section{Special Cases}
Optimal resource procurement under causality (\ref{resourceprocurement_causality})   is  an adjustable multi-stage robust optimization \cite{ben2009robust}, which is well-known to be challenging.  Instead of solving this problem, we compute upper bounds on $J^{**}$ by restricting to a class of allocation policies. Separately, we compute the exact price of causality in some special cases.

\subsection{Proportional Allocation Policy}
The simplest class of causal allocation policy are  proportional strategies. Here, the  uncertain demand signals $e$ are allocated to procured resources according to a fixed proportion $\beta=(\beta_1,\ldots,\beta_N)$ as
\begin{equation}
\phi_i(e)=\beta_i e, \text{where } \sum_{i=1}^N \beta_i=1, \beta_i\geq 0. 
\end{equation}
The allocation $\phi_i$ is clearly causal by construction. Under this policy, the resource procurement problem (\ref{resourceprocurement_causality}) reduces to:
\begin{equation}
\label{causality4proportaional}
\hspace{-1cm} \min_{\alpha_1,\ldots,\alpha_N, \beta} \sum_{i=1}^{N} \alpha_i \pi_i
\end{equation}
\begin{subnumcases}
{\label{constraint4causal_formualtion_prop}}
\beta_i E \subseteq  \alpha_i S_i, \quad \forall i\in [N] \label{polytopecontain_causal_prop} \\
\sum_{i=1}^N \beta_i=1, \\
\alpha_i\geq0,  \quad \forall i\in [N].
\end{subnumcases}
The optimal value of (\ref{causality4proportaional}) offers an upper bound for $J^{**}$. Under proportional allocation, it happens that the operator procures a single controllable resources.  More precisely:
\begin{proposition}
\label{meritorderproposition}
The optimal solution to (\ref{causality4proportaional}), $(\alpha_1^*,\ldots,\alpha_N^*, \beta^*)$, satisfies  $\alpha_j^*\neq 0$ for only one index $j$.  
\end{proposition}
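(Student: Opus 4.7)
The plan is to eliminate the resource quantities $\alpha_i$ in closed form as functions of the allocation proportions $\beta_i$, thereby reducing the problem to a linear program on the probability simplex, whose optima lie at vertices.

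For each resource $i$, introduce the \emph{Minkowski gauge} of $E$ relative to $S_i$:
\begin{equation*}
\rho_i := \min\{\alpha \geq 0 \,:\, E \subseteq \alpha S_i\}.
\end{equation*}
I would first verify $\rho_i$ is well defined and finite. Since $0 \in \text{int}(S_i)$ (Assumption \ref{asusmptiononresource}) and $E$ is bounded (Assumption \ref{asusmptiononpolytope}), some sufficiently large $\alpha$ ensures $E \subseteq \alpha S_i$; and because $S_i$ is closed and convex with $0$ in its interior, the set of feasible $\alpha$ is a closed half-line $[\rho_i,\infty)$, so the minimum is attained.

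Next, for any fixed $\beta_i \geq 0$, I would show that the constraint $\beta_i E \subseteq \alpha_i S_i$ is equivalent to the scalar inequality $\alpha_i \geq \beta_i \rho_i$. The case $\beta_i = 0$ is immediate since $0 \in \alpha_i S_i$ for any $\alpha_i \geq 0$; for $\beta_i > 0$, dividing by $\beta_i$ yields $E \subseteq (\alpha_i/\beta_i) S_i$, equivalent to $\alpha_i/\beta_i \geq \rho_i$ by definition of $\rho_i$. Since $\pi_i \geq 0$ and the objective is increasing in $\alpha_i$, the optimal choice given $\beta_i$ is $\alpha_i^\star(\beta_i) = \beta_i \rho_i$.

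Substituting this back into (\ref{causality4proportaional}) collapses the problem to
\begin{equation*}
\min_{\beta}\ \sum_{i=1}^N (\rho_i \pi_i)\,\beta_i \quad \text{s.t.} \quad \sum_{i=1}^N \beta_i = 1,\ \beta_i \geq 0,
\end{equation*}
which is a linear program over the probability simplex. The minimum of a linear objective on the simplex is attained at a vertex, so there exists an optimal $\beta^*$ with $\beta^*_j = 1$ for a single index $j \in \argmin_i \rho_i \pi_i$ and $\beta^*_i = 0$ otherwise. Unwinding via $\alpha_i^\star = \beta_i^\star \rho_i$ yields $\alpha_j^* = \rho_j$ and $\alpha_i^* = 0$ for all $i \neq j$, proving the claim. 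The main subtlety (rather than genuine obstacle) is the existence and positivity of $\rho_i$: it relies crucially on $0 \in \text{int}(S_i)$ and boundedness of both $E$ and $S_i$; once this is in place, the rest is a routine linear-programming argument, with the edge case $E = \{0\}$ handled trivially (the optimal cost is zero and no resource needs to be procured).
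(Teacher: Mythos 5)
Your proof is correct and follows essentially the same route as the paper's: define the scaling factor $k_i$ (your $\rho_i$) as the smallest $\alpha$ with $E\subseteq \alpha S_i$, reduce the constraint $\beta_i E\subseteq\alpha_i S_i$ to $\alpha_i\geq k_i\beta_i$, substitute $\alpha_i=k_i\beta_i$ at optimality, and observe that the resulting linear program over the simplex is minimized at a vertex. Your explicit verification that $\rho_i$ is finite and attained (via $0\in\text{int}(S_i)$ and boundedness of $E$) is a slight tightening of the paper's argument, which merely assumes $k_i$ exists and handles the alternative by discarding the resource.
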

The proof is deferred to the Appendix. Proposition \ref{meritorderproposition} suggests that under proportional allocation, each resource has a ``virtual price" $k_i\pi_i$, where $k_i$ captures the shape of the polytope $S_i$. The virtual prices determine a ``merit order" of the resources: at the optimal solution, only the ``cheapest" resource is selected.

\begin{remark}
A natural extension is to consider time-varying proportional allocation: 
\begin{equation}
\phi_i(e)=(\beta_i^1e^1, \ldots, \beta_i^Te^T)
\end{equation}
This offers a tighter upper bound on $J^{**}$. Under time-varying proportional allocation, resources can not be merit-ordered, and Proposition \ref{meritorderproposition} no longer holds. 
\end{remark}

\subsection{Causal-Affine Policies}
Consider a more general class of  policies as follows:
\begin{definition}
\label{linearpolicy}
The allocation policy $\gamma(\cdot)$ is called causal-affine if for any $i\in [N]$, there exist lower-triangular matrices $F_i\in \mathbb{R}^{T\times T}$ and vectors $D_i\in \mathbb{R}^T$ such that:
\begin{align}
\label{affinepolicydef}
\begin{cases}
 \phi_i(e)=F_i e+D_i, \\
 \sum_{i=1}^N F_i=I, \quad \sum_{i=1}^N D_i=0, 
\end{cases}
\end{align}
where $I$ is the identity matrix. 
\end{definition}
Such policies are causal by virtue of the lower-triangular constraints on $F_i$ (as for linear time-varying systems).
Under causal-affine  policy, the resource procurement problem  can be solved with a linear program:
\begin{theorem}
\label{linearupperbound}
The optimal resource procurement problem (\ref{resourceprocurement_causality}) restricted to causal-affine policies is equivalent to:
\begin{equation}
\label{resourceprocurement_causal_linear}
\hspace{-4.5cm} \min \sum_{i=1}^{N} \alpha_i \pi_i
\end{equation}
\begin{subnumcases}{\label{constraint4causal}}
v_k=\sum_{i=1}^N \left(F_iv_k+D_i\right), \quad \forall k\in [K], \label{conservationofall_linear}\\
\sum_{i=1}^N F_i=I, \quad \sum_{i=1}^N D_i=0,   \\
A_i\phi_i(v_k)\leq \alpha_iB_i,  \quad \forall i,k, \label{feasibleset_i}\\
\alpha_i\geq 0,  \quad \forall i\in [N],
\end{subnumcases}
The decision variables are $\alpha_1, \ldots, \alpha_N, F_i$ and $D_i$ for all $i,t$.
\end{theorem}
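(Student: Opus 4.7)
The plan is to show that restricting the policy class to causal-affine in problem (\ref{resourceprocurement_causality}) is equivalent to the finite-dimensional LP (\ref{resourceprocurement_causal_linear})--(\ref{constraint4causal}) in two reductions: first, replacing the functional identity $\sum_i \phi_i(e)=e$ on $E$ by equivalent finite algebraic constraints; second, replacing the robust membership $\phi_i(e)\in\alpha_i S_i$ for all $e\in E$ by a finite collection of linear inequalities at the vertices of $E$.

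First, I would substitute the causal-affine parameterization $\phi_i(e)=F_i e+D_i$ from Definition \ref{linearpolicy} into the constraints of (\ref{resourceprocurement_causality}). The conservation requirement becomes $\bigl(\sum_i F_i\bigr)e+\sum_i D_i=e$ for every $e\in E$. Since the left-hand side is an affine function of $e$, this identity on the polytope $E$ is equivalent to the matrix equalities $\sum_i F_i=I$ and $\sum_i D_i=0$; evaluating at the vertices $v_k$ yields the constraints (\ref{conservationofall_linear}), which are in fact implied by (equivalent to, under a mild full-dimensionality assumption on $E$) the matrix equalities also listed in (\ref{constraint4causal}). This step requires only a standard argument that two affine maps agreeing on a set with affine hull equal to $\mathbb{R}^T$ must coincide.

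Next, I would handle the robust feasibility $\phi_i(e)\in\alpha_i S_i$ for all $e\in E$. The core observation is that $\phi_i$ is affine and $\alpha_i S_i$ is convex (being the scaling of a convex polytope by $\alpha_i\geq 0$), so the affine image of a convex combination lies in $\alpha_i S_i$ whenever each image of a vertex does. Explicitly, for $e=\sum_k\lambda_k v_k$ with $\lambda_k\geq 0$ and $\sum_k\lambda_k=1$, one has
\begin{equation*}
\phi_i(e)=F_i\!\sum_k\lambda_k v_k+D_i=\sum_k\lambda_k(F_iv_k+D_i)=\sum_k\lambda_k\phi_i(v_k),
\end{equation*}
so $\phi_i(v_k)\in\alpha_iS_i$ for all $k$ implies $\phi_i(e)\in\alpha_iS_i$. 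Conversely, each $v_k$ lies in $E$, so the implication goes both ways. Using the H-representation $S_i=\{s\in\mathbb{R}^T: A_i s\leq B_i\}$ and $\alpha_i\geq 0$, we have $\alpha_i S_i=\{s: A_i s\leq \alpha_i B_i\}$ (the case $\alpha_i=0$ is consistent since $0\in S_i$ by Assumption \ref{asusmptiononresource}, so $B_i\geq 0$ componentwise). Membership $\phi_i(v_k)\in\alpha_i S_i$ is therefore exactly the linear inequality (\ref{feasibleset_i}).

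Combining the two reductions, the objective $\sum_i\alpha_i\pi_i$ is unchanged, and every constraint of the restricted version of (\ref{resourceprocurement_causality}) has been replaced by an equivalent linear constraint in the variables $\alpha_i,F_i,D_i$; conversely any feasible point of the LP yields a causal-affine policy feasible for (\ref{resourceprocurement_causality}). Hence the two optimization problems share the same feasible set (after projection onto $(\alpha_i)$) and the same optimal value. The main obstacle I anticipate is not difficulty but bookkeeping: verifying that the vertex-only equality (\ref{conservationofall_linear}) together with the matrix equalities is neither over- nor under-determined, and keeping the $\alpha_i=0$ edge case consistent with the H-representation when rescaling $S_i$.
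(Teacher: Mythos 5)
Your proposal is correct and follows essentially the same route as the paper's proof: substitute the causal-affine parameterization, observe that all resulting constraints are affine in $e$, and conclude that enforcing them on the polytope $E$ is equivalent to enforcing them at the vertices $v_k$. You spell out the convex-combination argument and the $\alpha_i=0$ edge case more explicitly than the paper does (which simply asserts linearity in $e$ and passes to the vertices), but the underlying reduction is identical.
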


The proof can be found in the Appendix. The solution to (\ref{resourceprocurement_causal_linear}) offers an improved  upper bound on $J^{**}$.

\subsection{Identical or Static Resources}
In some special cases, the price of causality can be determined exactly. One interesting case is when  the resource sets $S_i$ are identical up to a scale factor:
\begin{proposition}
\label{poais1_3}
Assume there exists a $\delta_i\in \mathbb{R}$, such that $\delta_iS_i=\delta_jS_j$ for  $\forall i,j\in [N]$.    Then, $\text{PoC}=1$.
\end{proposition}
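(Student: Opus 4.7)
The plan is to reduce the problem, under the identical-up-to-scale assumption, to a single-resource procurement problem that is causally achievable via the trivial identity allocation.

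First I would use the hypothesis to write every resource set as a scaled copy of a common reference polytope. Setting $S_0 := \delta_1 S_1$, the assumption gives $S_i = (1/\delta_i) S_0$ for all $i$. (Assumption~\ref{asusmptiononresource} forces $S_0$ to be a bounded convex polytope with $0$ in its interior; in particular no $\delta_i$ can be zero, and I would either note that one may take all $\delta_i>0$ after relabeling, or remark that $S_0$ must be centrally symmetric in the mixed-sign case, which again collapses scaling by $\delta_i$ to scaling by $|\delta_i|$.)

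Next I would invoke the following elementary fact for any convex set $C$ containing the origin and any nonnegative scalars $\lambda_1,\lambda_2$:
\begin{equation*}
\lambda_1 C \oplus \lambda_2 C \;=\; (\lambda_1+\lambda_2)\, C,
\end{equation*}
whose proof is one line in each direction using convexity (write $\lambda_1 c_1+\lambda_2 c_2 = (\lambda_1+\lambda_2)\bigl(\tfrac{\lambda_1}{\lambda_1+\lambda_2}c_1+\tfrac{\lambda_2}{\lambda_1+\lambda_2}c_2\bigr)$). Applying this inductively to $S_0$ yields
\begin{equation*}
\alpha_1 S_1 \oplus \cdots \oplus \alpha_N S_N \;=\; \Bigl(\sum_{i=1}^N \tfrac{\alpha_i}{\delta_i}\Bigr)\, S_0.
\end{equation*}
Hence the set-containment constraint \eqref{polytopecontain} is equivalent to $E \subseteq \beta\, S_0$ with $\beta := \sum_i \alpha_i/\delta_i$. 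Let $\beta^{*} := \min\{\beta\ge 0 : E\subseteq \beta S_0\}$; this minimum exists and is finite because $0\in\operatorname{int}(S_0)$ and $E$ is bounded.

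Then I would reformulate the oracle problem \eqref{resourceprocurement} as
\begin{equation*}
J^{*} \;=\; \min\Bigl\{\,\sum_i \alpha_i \pi_i \;:\; \sum_i \alpha_i/\delta_i \ge \beta^{*},\ \alpha_i\ge 0\Bigr\},
\end{equation*}
a single-constraint LP whose optimum is attained at a vertex with at most one nonzero $\alpha_i$. The optimal choice is the index $j^{\star} \in \argmin_i \delta_i \pi_i$ with $\alpha_{j^{\star}} = \beta^{*}\delta_{j^{\star}}$, giving $J^{*} = \beta^{*}\delta_{j^{\star}}\pi_{j^{\star}}$.

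Finally I would exhibit a causal allocation policy achieving this same cost in \eqref{resourceprocurement_causality}. Procure only resource $j^{\star}$ at level $\alpha_{j^{\star}}=\beta^{*}\delta_{j^{\star}}$, and take the trivial policy $\phi_{j^{\star}}(e)=e$, $\phi_i(e)=0$ for $i\ne j^{\star}$. This is causal by inspection (each coordinate $y^t$ is a function of $u^t$ alone), it satisfies $\sum_i \phi_i(e)=e$, and feasibility $\phi_{j^{\star}}(e)\in \alpha_{j^{\star}}S_{j^{\star}}=\beta^{*}S_0$ for every $e\in E$ is exactly the defining property of $\beta^{*}$. Therefore $J^{**}\le J^{*}$; combined with the always-valid inequality $J^{**}\ge J^{*}$, we conclude $J^{**}=J^{*}$ and $\mathrm{PoC}=1$.

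The only subtle step is the Minkowski-sum collapse $\lambda_1 C\oplus\lambda_2 C=(\lambda_1+\lambda_2)C$, which depends essentially on convexity and fails without it; everything else is a clean LP argument plus the observation that when a single resource suffices, no nontrivial allocation across time or across resources is ever needed, so causality is free.
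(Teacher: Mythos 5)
Your proof is correct, and it shares the paper's key step --- the Minkowski-sum collapse $\alpha_1 S_1\oplus\cdots\oplus\alpha_N S_N=\bigl(\sum_i\alpha_i/\delta_i\bigr)S_0$ --- but then takes a different route to the finish. The paper never solves the reduced LP: it takes \emph{any} optimal non-causal asset mix $\alpha^*$, sets $\sigma_i=\alpha_i^*/\delta_i$, and verifies that the proportional policy $\phi_i^t(e^{1:t})=\tfrac{\sigma_i}{\sum_j\sigma_j}\,e^t$ is causal and feasible, since $E\subseteq\bigl(\sum_j\sigma_j\bigr)S_0$ is equivalent to $\tfrac{\sigma_i}{\sum_j\sigma_j}E\subseteq\sigma_i S_0=\alpha_i^* S_i$. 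You instead solve the single-constraint LP explicitly, conclude that an optimal mix concentrates on the resource minimizing $\delta_i\pi_i$, and then observe that with a single procured resource the identity allocation makes causality free. Both routes are valid; yours has the bonus of recovering the merit-order structure (in the spirit of Proposition~\ref{meritorderproposition}), while the paper's is marginally more economical because it never needs to identify the optimizer and shows directly that \emph{every} non-causally feasible mix is causally realizable. One small caveat: your parenthetical claim that $S_0$ ``must be centrally symmetric'' in the mixed-sign case is not justified --- mixed signs of the $\delta_i$ merely make some $S_i$ negative dilates of $S_0$ --- but this does not damage your argument in substance, since the paper's own proof makes the same implicit assumption that the scalars $\alpha_i/\delta_i$ entering the Minkowski collapse are nonnegative.
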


A second interesting case is when the resources have no dynamic constraints. This happens when the resource sets $S_i$ are hyper-rectangles:
\begin{equation}
S_i=\{s_i\in \mathbb{R}^T |
\underline{s}_i^t \leq s_i^t  \leq \bar{s}_i^t, t\in [T] \}.
\end{equation}
This is because there is no constraint coupling between sample values $s_i^t$.  We have the following:
\begin{proposition}
\label{poais1_2}
For $\forall i\in [N]$, let $S_i$ be a hyper-rectangle: $S_i=\{s_i\in \mathbb{R}^T |
\underline{s}_i^t \leq s_i^t  \leq \bar{s}_i^t, t\in [T] \}$.  Then,  $\text{PoC}=1$.
\end{proposition}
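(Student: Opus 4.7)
The plan is to prove the reverse inequality $J^{**} \le J^*$; since $J^{**} \ge J^*$ always holds by construction, this will force $\text{PoC}=1$. The underlying intuition is that when each $S_i$ is a hyper-rectangle, the resource constraints decouple completely across time periods, so any non-causal allocation can be replicated by a memoryless causal rule that looks only at the current sample $e^t$.

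First I would observe that the Minkowski sum of hyper-rectangles is again a hyper-rectangle. Concretely, if $(\alpha_1^*,\ldots,\alpha_N^*)$ is an optimizer of the oracle problem \eqref{resourceprocurement}, then
\[
\alpha_1^* S_1 \oplus \cdots \oplus \alpha_N^* S_N
 \;=\; \Bigl\{s\in\mathbb{R}^T : \textstyle\sum_i \alpha_i^* \underline{s}_i^t \le s^t \le \sum_i \alpha_i^* \bar{s}_i^t,\ \forall t\in[T]\Bigr\}.
\]
The containment constraint \eqref{polytopecontain} therefore reduces to the pointwise-in-time bounds $\sum_i \alpha_i^* \underline{s}_i^t \le e^t \le \sum_i \alpha_i^* \bar{s}_i^t$ for every $e\in E$ and every $t$.

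Next I would construct a causal allocation policy that achieves cost $J^*$. At each time $t$, after $e^t$ is revealed, define
\[
\lambda^t(e^t) \;=\; \frac{e^t - \sum_j \alpha_j^* \underline{s}_j^t}{\sum_j \alpha_j^*(\bar{s}_j^t - \underline{s}_j^t)}\in[0,1],
\qquad
\phi_i(e)^t \;=\; \alpha_i^* \underline{s}_i^t + \lambda^t(e^t)\,\alpha_i^*(\bar{s}_i^t - \underline{s}_i^t),
\]
with the convention that if the denominator vanishes (a degenerate slice), any convex combination summing to $e^t$ is used. The key verifications are: (i) $\phi_i(e)^t \in [\alpha_i^*\underline{s}_i^t,\alpha_i^*\bar{s}_i^t]$, so $\phi_i(e)\in \alpha_i^* S_i$; (ii) $\sum_i \phi_i(e)^t = e^t$ by direct telescoping of the affine combination; and (iii) $\phi_i(e)^t$ is a function of $e^t$ alone, so $\phi_i$ is causal (in fact memoryless). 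Consequently $(\alpha^*,\gamma)$ with $\gamma=(\phi_1,\ldots,\phi_N)$ is feasible for the causal problem \eqref{resourceprocurement_causality}, giving $J^{**}\le \sum_i \alpha_i^* \pi_i = J^*$.

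I do not anticipate a serious obstacle; the argument is really a statement that ``no dynamics'' means ``no coupling across time,'' which trivially implies causal achievability. The only delicate point is handling the degenerate case when $\sum_j \alpha_j^*(\bar{s}_j^t - \underline{s}_j^t)=0$, but Assumption \ref{asusmptiononresource} ($0\in\text{int}\,S_i$) ensures $\bar{s}_i^t>0>\underline{s}_i^t$, so this can happen only when every $\alpha_j^*=0$, in which case $E\subseteq\{0\}$ and the statement is vacuous.
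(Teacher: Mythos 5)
Your proposal is correct and follows essentially the same route as the paper: both fix the oracle-optimal $\alpha^*$, note that the hyper-rectangle structure reduces the containment constraint to decoupled per-time bounds, and exhibit a memoryless causal allocation of each sample $e^t$ within those bounds. The only difference is cosmetic — the paper asserts existence of such a per-time decomposition by contradiction, whereas you write down an explicit interpolation formula (and correctly dispatch the degenerate denominator via Assumption \ref{asusmptiononresource}).
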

The proofs of Proposition \ref{poais1_3} and Proposition \ref{poais1_2} are deferred to the Appendix. 

\begin{remark}
Proposition \ref{poais1_3}-\ref{poais1_2} suggest that  $PoC>1$ is because of the dynamics constraints and the diversity  of the controllable resources.  The price of causality is greater than 1 only if both factors are present. 
\end{remark}

\subsection{Special Uncertain Signals}
Proposition \ref{poais1_2} argues that $PoC > 1$ due to the diversity and dynamics of the resources. A closely related question is whether the dynamics of the uncertain demand signals contribute materially to the price of causality. We have the following:
\begin{proposition}
\label{poais1}
Assume that $E$ is the hyper-rectangle
\begin{equation*}
E=\{e\in \mathbb{R}^T |
\underline{e}^t \leq e^t  \leq \bar{e}^t, \forall t\in [T]\}.
\end{equation*}Then, it is possible that $\text{PoC}>1$.
\end{proposition}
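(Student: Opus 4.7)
The proof will be by explicit counterexample. I plan to take $T=3$ and the hyper-rectangle $E=[-1,1]^3$, together with two resources whose dynamic constraints couple different aggregates of time:
\[
S_1 = \{s\in\R^3 : \|s\|_\infty \leq 1,\ |s^1+s^2+s^3|\leq 1\}, \qquad S_2 = \{s\in\R^3 : \|s\|_\infty \leq 1,\ |s^1-s^3|\leq 1\},
\]
at unit prices $\pi_1 = \pi_2 = 1$. Each $S_i$ is a bounded polytope containing $0$ in its interior, so Assumption \ref{asusmptiononresource} is satisfied. The design intuition is that $S_1$ constrains the overall sum while $S_2$ constrains the first-to-last swing, so covering the extreme signals of $E$ will demand a nontrivial split between the two resources whose optimal value at $t=1$ depends on the yet-unrevealed future.

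The first step is to evaluate $J^\star$ via Theorem \ref{noncausalresourceprocurement}. Enumerating the eight vertices of $E$, the Minkowski containment reduces to three scalar inequalities on $(\alpha_1,\alpha_2)$: $\alpha_1+3\alpha_2\geq 3$ from vertices with $|\sum_t e^t|=3$, $2\alpha_1+\alpha_2\geq 2$ from vertices with $|e^1-e^3|=2$, and the weaker $\alpha_1+\alpha_2\geq 1$ from coordinate bounds. Minimizing $\alpha_1+\alpha_2$ yields the unique LP vertex $(\alpha_1^\star,\alpha_2^\star)=(3/5,4/5)$ with $J^\star = 7/5$. I will confirm non-causal feasibility by writing explicit Minkowski factorizations at every vertex of $E$.

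The crucial step is to show that $(3/5,4/5)$ is not causally realizable. Since both binding inequalities are tight, the admissible factorization is rigid at the extremal signals. For $e=(1,1,1)$, saturating $\alpha_1+3\alpha_2 = 3$ together with the coordinate floor $s_1^t \geq 1/5$ (induced by $|s_2^t|\leq 4/5$) forces $s_1 = (1/5)(1,1,1)$ and hence $s_1^1 = 1/5$. For $e=(1,-1,-1)$, saturating the swing constraint forces $s_1^1 - s_1^3 = 6/5$, which under $|s_1^t| \leq 3/5$ admits only $s_1^1 = 3/5$, $s_1^3 = -3/5$. Both signals share $e^1 = 1$, so no causal policy can commit $s_1^1$ to a single value serving both futures; since the LP optimum is unique, this gives $J^{**} > J^\star = 7/5$, yielding $\mathrm{PoC} > 1$. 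The main obstacle is the uniqueness-at-tightness argument, which I plan to handle by intersecting the $s_1$-coordinate intervals induced by the box constraints of both $S_1$ and $S_2$ against the binding scalar equations, which pin down exactly one admissible value of $s_1^1$ in each of the two conflicting scenarios.
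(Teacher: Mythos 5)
Your proposal is correct and follows essentially the same route as the paper: both prove the claim by exhibiting an explicit counterexample with a hyper-rectangular $E$ and two diverse dynamic resources, showing the non-causal optimum is unique via necessary scalar inequalities plus explicit vertex factorizations, and then deriving a contradiction from two signals sharing the prefix $e^1$ whose forced factorizations commit $s_1^1$ to different values (the paper uses two batteries and $E=[0,1]\times[0,1]\times[-5,7]$, while you use custom polytopes and $E=[-1,1]^3$, but the logical skeleton is identical). Your instance is in fact worked out in more detail than the paper's, which omits the verification entirely.
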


Proposition \ref{poais1} suggests that the dynamics of the uncertain demand signals is not the key driver for the price of causality. If the demand signals are temporally correlated, one could forecast future signal values based on the current sample. This can make the price of causality small. Consider for instance, the intermittent power output of a wind farm which causes the need to procure balancing power. Day ahead forecasting of wind generation can have large errors ($\pm 30\%$,  see \cite{giebel2003state}). On the other hand, hour-ahead forecasts can be fairly accurate ($\pm 5\%$, see \cite{giebel2003state}]). Therefore, the future signal values are known with high accuracy after the first uncertain signal sample $e^1$ is revealed. This is close to the non-causal case where we know all future signal values in advance. Then, we conjecture that $PoC\approx 1$.


%
%
%

\subsection{Electricity Storage}
We can obtain the exact price of causality for an important class of problems where the controllable resources are electricity storage devices. 

Consider a collection of $N$ batteries.  Battery $i$ has a capacity constraint $C_i$ and a maximum charge/discharge rate $r_i$. 
Denote the initial state of charge for the $i$th battery (as a percentage of capacity) as $\theta_i$. Then, $S_i$ contains all $s_i\in \mathbb{R}^T$ that satisfy the constraints:
\begin{numcases}{}
-r_i\leq s_i^t \leq r_i, \quad \forall t\in [T],  \label{rateconstraint}\\
0\leq \theta_iC_i+\sum_{k=1}^t s_i^k\leq C_i, \quad \forall t\in [T], i\in [N], \label{capacityconstaint}
\end{numcases}

We  show that $J^{**}$ can be obtained by solving a linear program. 
\begin{theorem}
\label{proposition4exactpoa}
Consider a group of batteries. Assume  their parameters satisfy $\sum_{i=1}^N C_i\leq 2\sum_{i=1}^N r_i $, and that $E=S_1\oplus S_2 \oplus \cdots \oplus S_N$, then there exists $T_0>0$, so that for $\forall T> T_0$,  $J^{**}$ is the optimal value for the following linear program:
\begin{equation}
\label{poaaslinearprogramming}
\hspace{-2cm} \min_{\alpha_1,\ldots,\alpha_N} \sum_{i=1}^{N} \alpha_i \pi_i
\end{equation}
\begin{subnumcases}{\label{constraint4poaaslinearprogramming}}
\sum_{i=1}^N \alpha_i r_i \geq \sum_{i=1}^N r_i, \label{constraint1forpoaaslp}\\
\sum_{i=1}^{N} \alpha_i  \min (2r_i,C_i)\geq\sum_{i=1}^N C_i, 
 \label{constraint2forpoaaslp}
\end{subnumcases}
 where $\min (a,b)$ is the smallest element of $\{ a, b \}$.
\end{theorem}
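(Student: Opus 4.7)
The plan is to prove the theorem by establishing that the feasible set of the causal procurement problem (\ref{resourceprocurement_causality}) coincides with that of the linear program (\ref{poaaslinearprogramming})--(\ref{constraint4poaaslinearprogramming}) once $T$ exceeds some threshold $T_0$. Since the two problems share the same linear objective $\sum_i \alpha_i \pi_i$ and the same decision variables $\alpha_i$, this equivalence of feasible sets will yield the claimed equality of optimal values. The argument splits into a necessity direction (every causally feasible $\alpha$ satisfies the two LP inequalities) and a sufficiency direction (every $\alpha$ satisfying the two LP inequalities admits a causal allocation policy).

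For the necessity direction I would exhibit two adversarial signals in $E$. For the rate bound (\ref{constraint1forpoaaslp}), pick an interior time $\tau$ and consider the signal $e$ with $e^\tau = \sum_i r_i$ and a mild compensating profile elsewhere; feasibility in $E$ follows from the decomposition $s_i^\tau = r_i$ for each battery (with the residual spread over $O(\max_i C_i/r_i)$ steps, which is why we need $T$ large). Causality and the per-step rate bound on each $\alpha_i S_i$ immediately force $\sum_i \alpha_i r_i \geq \sum_i r_i$. For the capacity bound (\ref{constraint2forpoaaslp}), I would concentrate a net charge close to $\sum_i C_i$ into two consecutive periods; this is realizable in $E$ precisely because the hypothesis $\sum_i C_i \leq 2 \sum_i r_i$ allows two periods of aggregate rate $\sum_i r_i$ to absorb the entire stack. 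Each procured battery $\alpha_i S_i$ can absorb over two consecutive steps at most $\alpha_i \min(2 r_i, C_i)$ (either rate-limited across two steps or outright capacity-limited), so aggregating over $i$ produces $\sum_i \alpha_i \min(2 r_i, C_i) \geq \sum_i C_i$.

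For the sufficiency direction, given $\alpha$ satisfying both LP constraints, I would construct an explicit causal policy. A natural candidate is a rate-proportional split $\phi_i(e^t) = (\alpha_i r_i / \sum_j \alpha_j r_j)\, e^t$ corrected by a state-of-charge balancing term that diverts flow away from batteries close to saturation or depletion and toward batteries with larger headroom. Verification reduces to an invariant argument that tracks in parallel the cumulative charge of each procured battery and the corresponding extreme trajectories of $E$: the first LP constraint certifies enough instantaneous aggregate rate to absorb any $e^t$, and the second certifies enough two-period aggregate capacity to ride out the worst burst the adversary can inject. An induction on $t$ then shows each procured battery's state-of-charge trajectory remains in $[0, \alpha_i C_i]$ for every $e \in E$.

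The main obstacle is the sufficiency direction. The coupling between rate and capacity under causal allocation is delicate: a myopic split can prematurely exhaust a single battery while aggregate resources remain plentiful, so the balancing correction must be chosen carefully to preserve the worst-case invariant. The threshold $T_0$ reflects a startup transient of length on the order of $\max_i C_i / r_i$, during which initial state-of-charge imbalances are absorbed; after this transient a steady-state recursion closes the argument. A secondary technicality is checking that the adversarial signals of the necessity direction actually lie in $E = \bigoplus_i S_i$ for the given initial states $\theta_i$, which is handled by placing the spike and the two-period burst sufficiently far from the endpoints of the horizon.
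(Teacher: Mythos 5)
Your overall skeleton (show the LP constraints are exactly the feasibility conditions for causal coverage, split into necessity and sufficiency) matches the paper, and your treatment of the rate constraint (\ref{constraint1forpoaaslp}) is fine. But there are two genuine gaps. First, your necessity argument for (\ref{constraint2forpoaaslp}) does not work. The two-period burst with net charge $\sum_i C_i$ is generally \emph{not} in $E=S_1\oplus\cdots\oplus S_N$: each unit battery can absorb at most $\min(2r_i,C_i)$ over two consecutive steps, so the largest two-period burst realizable in $E$ is $\sum_i\min(2r_i,C_i)$, which is strictly less than $\sum_i C_i$ whenever some battery has $C_i>2r_i$ (e.g.\ $C_1=r_1=1$, $C_2=3$, $r_2=1$ satisfies $\sum C_i\le 2\sum r_i$ but caps the two-period burst at $3<4$). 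Your argument only yields the weaker inequality $\sum_i\alpha_i\min(2r_i,C_i)\ge\sum_i\min(2r_i,C_i)$. More tellingly, your burst argument never invokes causality, so if it were valid it would make (\ref{constraint2forpoaaslp}) necessary for the \emph{oracle} problem as well --- contradicting the fact that $\text{PoC}>1$ for batteries. The paper's necessity proof is intrinsically causal: it constructs (Lemma~\ref{lemmaforbigtheor}) two signals in $E$ sharing a common $(T-1)$-prefix whose final samples are $+\sum_i C_i/2$ and $-\sum_i C_i/2$; causality pins the state of charge $x_i^{T-1}$ to a single value, both one-step swings must then be feasible, and adding the two resulting inequalities with $\min(r_i,C_i-x)+\min(r_i,x)\le\min(2r_i,C_i)$ gives (\ref{constraint2forpoaaslp}). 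This branching-prefix idea is the missing ingredient, and it is also where the hypothesis $\sum_i C_i\le 2\sum_i r_i$ and the threshold $T_0$ actually enter.

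Second, your sufficiency direction is a statement of intent rather than a proof: you correctly identify that a myopic proportional split can strand capacity in a prematurely saturated battery, but the ``state-of-charge balancing correction'' and the inductive invariant are exactly the hard part, and you do not exhibit them. The paper resolves this with an explicit construction: sort batteries by $C_i/r_i$, split any battery with $r_i<C_i<2r_i$ into two sub-batteries, and stack blocks of size $\alpha_i r_i$ in an alternating order so that the procured fleet emulates a single virtual battery of capacity $\sum_i\alpha_i\min(2r_i,C_i)\ge\sum_i C_i$ and rate $\sum_i\alpha_i r_i\ge\sum_i r_i$, which is then operated greedily (hence causally) and covers $E$. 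Without some such explicit policy and its verification, the equivalence of the two feasible sets --- and hence the identification of $J^{**}$ with the LP value --- is not established.
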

The proof of Theorem \ref{proposition4exactpoa} can be found in the Appendix. 
Under the assumption of Theorem \ref{proposition4exactpoa}, we can solve  (\ref{resourceprocurement_linear}) and (\ref{poaaslinearprogramming}) to derive $J^*$ and $J^{**}$, respectively. This provides the exact price of causality.

\begin{remark}
 Theorem \ref{proposition4exactpoa} relies on a crucial condition: $E=S_1 \oplus \cdots \oplus S_N$. We emphasize that this condition is not too restrictive if there are large number of sufficiently diverse resources. In this case, for  given  $S_i$ and $\pi_i$, we can first find $(\eta_1,\ldots,\eta_N)$ such that $E$ is sufficiently close to $\eta_1S_1 \oplus \cdots \oplus \eta_N S_N$. We define the unit resource as $\eta_iS_i$ and the unit price as $\eta_ip_i$. It is easy to verify that this is equivalent to (\ref{poaaslinearprogramming}).
\end{remark}


\section{Fair Cost Allocation}
The optimal resource procurement cost $J^{**}$ must be allocated to the uncontrollable resources that collectively create the uncertain demand signal $e$. We study how to allocate this cost fairly. 

Consider a group of $L$ uncontrollable resources. Resource $i$ contributes $d_i\in \mathbb{R}^T$ to the collective demand signal, so  $e=\sum_{i=1}^N d_i$. 
A cost allocation mechanism is a collection of maps
\begin{equation}
\pi_i: \mathbb{R}^T \times \mathbb{R}^T \rightarrow \mathbb{R}: (d_i, e) \rightarrow J_i
\end{equation}  
This mechanism maps  individual demands $d_i$  and the aggregate demand $e$ to the cost $J_i$ allocated to resource $i$, i.e., $J_i = \pi_i (d_i, e)$. A just and reasonable cost allocation should satisfy equity, budget balance and fairness. More formally, we have
\begin{axiom}[Equity]
The cost allocation $\pi_i(d_i,e)$ is equal if players with the same contribution  have the same cost allocation, i.e., if $d_i=d_j$, then $\pi_i(d_i,e)=\pi_j(d_j,e)$.
\label{axiom1}
\end{axiom}

\begin{axiom}[Budget Balance]
The cost allocation $\pi_i(d_i,e)$ is budget balanced,   i.e., $\sum_{i=1}^L \pi_i(d_i,e)=J^{**}$.
\label{axiom2}
\end{axiom}

\begin{axiom}[Penalty for Cost Causation]
Those who contribute to the uncertain signals should pay for it,  i.e., if $d_i\cdot e>0$, $\pi_i(d_i,e)>0$.
\label{axiom3}
\end{axiom}

\begin{axiom}[Reward for Cost Mitigation]
Those who mitigate the uncertain signals should be rewarded,  i.e., if $d_i\cdot e<0$, $\pi_i(d_i,e)<0$.
\label{axiom4}
\end{axiom}

We refer to these Axioms collectively as the cost causation principle \cite{chakraborty2017cost}.  
We have the following:
\begin{proposition}
\label{payment}
The cost allocation mechanism 
\begin{equation}
\pi_i(d_i,e)= \dfrac{d_i^T e}{||e||^2} J^{**}
\end{equation}  
satisfies the cost causation principle. 
\end{proposition}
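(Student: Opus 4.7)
The plan is a direct, axiom-by-axiom verification; the proof amounts to a short algebraic check rather than a deep argument, so the value of the proposal is in laying out the order of the verification cleanly and flagging the one genuinely delicate point.

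First I would dispose of Axiom~\ref{axiom1} (Equity). The formula depends on $d_i$ only through the scalar inner product $d_i^T e$, so if $d_i = d_j$ then $d_i^T e = d_j^T e$ and hence $\pi_i(d_i,e) = \pi_j(d_j,e)$. Next I would check Axiom~\ref{axiom2} (Budget Balance). Here the key observation is that the decomposition $e = \sum_{i=1}^L d_i$ lets the sum of allocations telescope:
\begin{equation*}
\sum_{i=1}^L \pi_i(d_i,e) \;=\; \frac{J^{**}}{\|e\|^2}\sum_{i=1}^L d_i^T e \;=\; \frac{J^{**}}{\|e\|^2}\,e^T e \;=\; J^{**}.
\end{equation*}

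For Axioms~\ref{axiom3} and~\ref{axiom4} I would argue together. Since $J^{**} \ge J^* \ge 0$ is the minimum procurement cost and $\|e\|^2 > 0$ whenever $e \neq 0$, the prefactor $J^{**}/\|e\|^2$ is strictly positive. Thus the sign of $\pi_i(d_i,e)$ coincides with the sign of $d_i^T e$, which immediately gives both the penalty condition ($d_i^T e > 0 \Rightarrow \pi_i > 0$) and the reward condition ($d_i^T e < 0 \Rightarrow \pi_i < 0$).

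The only real obstacle is the degenerate case $e = 0$, where the formula is not defined because $\|e\|^2 = 0$. I would handle this by noting that when $e \equiv 0$ over the delivery window the operator need not procure any balancing resources and $J^{**} = 0$, so the cost allocation problem is trivial; under the standing assumption that the aggregate demand signal $e$ is nonzero (which is implicit in the procurement setting and in the hypotheses of Axioms~\ref{axiom3}--\ref{axiom4}), the four axioms hold as shown above. I would mention this caveat in a brief remark rather than elaborate it in the proof proper.
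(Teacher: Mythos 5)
Your axiom-by-axiom verification is exactly the direct check the paper has in mind---it omits the proof entirely as ``easily following from the definition of the axioms''---and your telescoping of $\sum_i d_i^T e = e^T e$ for budget balance and your flagging of the degenerate case $e=0$ are both correct. The only nit: $J^{**}\ge J^*\ge 0$ gives nonnegativity, not strict positivity, of the prefactor $J^{**}/\|e\|^2$; for Axioms~\ref{axiom3}--\ref{axiom4} you need $J^{**}>0$, which holds under the implicit standing assumptions that the unit prices $\pi_i$ are positive and $E\neq\{0\}$ (so some $\alpha_i>0$ is forced by the containment constraint).
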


The proof of Proposition \ref{payment} easily follows from the definition of the axioms, and is thus omitted.

\begin{figure}[t!]
\centering
%
%
\definecolor{mycolor1}{rgb}{0.60000,0.20000,0.00000}%
\begin{tikzpicture}

\begin{axis}[%
width=2.4in,
height=1in,
at={(3.216in,1.601in)},
scale only axis,
xmin=0,
xmax=10,
xlabel={$\text{ratio }\kappa$},
ymin=0,
ymax=7,
ylabel={Procurement Cost},
axis background/.style={fill=white},
legend style={at={(0,0.6)}, anchor=south west, legend cell align=left, align=left, draw=black}
]
\addplot [color=mycolor1, line width=1.5pt]
  table[row sep=crcr]{%
0	5.06616970596951e-11\\
0.5	1.00000000102034\\
1	2.00000000000995\\
1.5	2.5000000000031\\
2	3.00000000000455\\
2.5	3.50000000007878\\
3	4.00000000000205\\
3.5	4.00000000109316\\
4	4.00000000000944\\
4.5	4.00000000008544\\
5	4.00000000019111\\
5.5	4.00000000009149\\
6	4.00000000768966\\
6.5	4.00000000301185\\
7	4.00000000120281\\
7.5	4.00000000013611\\
8	4.00000000012676\\
8.5	4.00000000012204\\
9	4.00000000011426\\
9.5	4.00000000009626\\
10	4.00000000007162\\
};
\addlegendentry{J*}

\addplot [color=black, dashdotted, line width=1.5pt]
  table[row sep=crcr]{%
0	1.27560814172335e-13\\
0.5	1.0000000000325\\
1	2.00000000134843\\
1.5	3.00000000146667\\
2	4.00000000543158\\
2.5	4.00000000000042\\
3	4.00000000000014\\
3.5	4.00000000000002\\
4	4.00000002534246\\
4.5	4.00000002315145\\
5	4.00000001868696\\
5.5	4.00000001498373\\
6	4.00000001233985\\
6.5	4.00000001028199\\
7	4.00000000841584\\
7.5	4.00000000671571\\
8	4.00000000519044\\
8.5	4.00000000386544\\
9	4.00000000276357\\
9.5	4.00000000202787\\
10	4.00000000000294\\
};
\addlegendentry{J**}

\end{axis}
\end{tikzpicture}%
\caption{The  procurement costs under different price ratio $\kappa$.}
\label{figure1}
\end{figure}
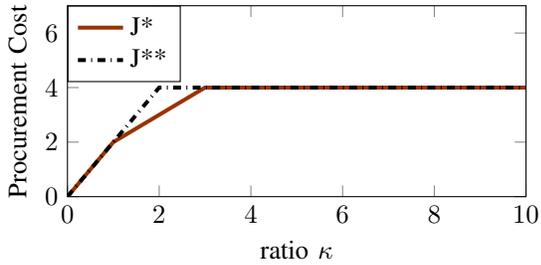

\begin{figure}[t!]
\centering
%
%
\definecolor{mycolor1}{rgb}{0.60000,0.20000,0.00000}%
\begin{tikzpicture}

\begin{axis}[%
width=2.4in,
height=1in,
at={(3.216in,1.601in)},
scale only axis,
xmin=0,
xmax=10,
xlabel={$\text{ratio }\kappa$},
ymin=0.999999999012165,
ymax=1.4,
ylabel={price of causality},
axis background/.style={fill=white}
]
\addplot [color=mycolor1, line width=1.5pt, forget plot]
  table[row sep=crcr]{%
0	1\\
0.5	0.999999999012165\\
1	1.00000000066924\\
1.5	1.20000000058518\\
2	1.33333333514184\\
2.5	1.14285714283154\\
3	0.999999999999523\\
3.5	0.999999999726715\\
4	1.00000000633326\\
4.5	1.0000000057665\\
5	1.00000000462396\\
5.5	1.00000000372306\\
6	1.00000000116255\\
6.5	1.00000000181754\\
7	1.00000000180326\\
7.5	1.0000000016449\\
8	1.00000000126592\\
8.5	1.00000000093585\\
9	1.00000000066233\\
9.5	1.0000000004829\\
10	0.99999999998283\\
};
\end{axis}
\end{tikzpicture}%
\caption{The price of causality under different price ratio $\kappa$.}
\label{figure2}
\end{figure}
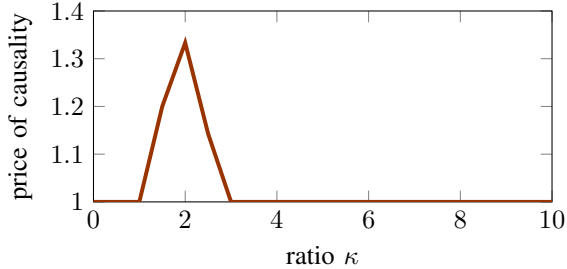

\section{Simulation Studies}

\subsection{Electricity Storage}
We consider two batteries that cover a signal over a $T=3$ period delivery window. The capacity of the batteries are $C_1=1$ and $C_2=3$, and the maximum charge/discharge rates are $r_1=r_2=1$. Assume the batteries fully discharged at time $0$. Then $S_1$ is the set of $s_1\in \mathbb{R}^3$ such that:
\begin{align*}
\begin{cases}
-1\leq s_1^t \leq 1, \quad \forall t=1,2,3. \\
0\leq s_1^1+s_1^2+s_1^3\leq 1.
\end{cases}
\end{align*}
and $S_2$  is the set of $s_2\in \mathbb{R}^3$ such that:
\begin{align*}
\begin{cases}
-1\leq s_2^t \leq 1, \quad \forall t=1,2,3. \\
0\leq s_2^1+s_2^2+s_2^3\leq 3. 
\end{cases}
\end{align*}
The uncertain demand signal $e$ is contained in $E = S_1 \oplus S_2$.. All assumptions in Theorem \ref{proposition4exactpoa} are satisfied and we can compute the exact price of causality.

We explore the influence of the price vector $(\pi_1,\pi_2)$ on the price of causality. Define $\kappa=\pi_2/\pi_1$. In this simulation, we fix $\pi_1=1$ and vary $\kappa$ from $0$ to $10$ in increments of $0.1$. The optimal procurement costs $J^*$ and $J^{**}$ are shown in Figure \ref{figure1}, and the price of causality as a function of $\kappa$ is shown in Figure \ref{figure2}. In the extremes, $\kappa<1$ or $\kappa>3$, one battery is too expensive, and only the cheaper battery is procured, i.e., either $\alpha_1=0$  or $\alpha_2=0$. In this case, the allocation problem is trivial, and the price of causality is 1. In the intermediate range, $1\leq \kappa \leq 3$, both batteries are procured, and causal revelation of the demand signal influences the optimal cost. The maximum price of causality can be as large as $1.33$.

 \begin{figure*}[bt]%
\begin{minipage}[b]{0.49\linewidth}
\centering
\input{figure3}
\caption{The procurement costs under different price ratio $\kappa$.}
\label{figure3}
\end{minipage}
\begin{minipage}[b]{0.02\linewidth}
\hfill
\end{minipage}
\begin{minipage}[b]{0.49\linewidth}
\centering
%
%
\definecolor{mycolor1}{rgb}{0.60000,0.20000,0.00000}%
\begin{tikzpicture}

\begin{axis}[%
width=2.4in,
height=1in,
at={(3.216in,1.601in)},
scale only axis,
unbounded coords=jump,
xmin=0,
xmax=4,
xlabel style={font=\color{white!15!black}},
xlabel={$\kappa$},
ymin=1,
ymax=1.04,
ylabel style={font=\color{white!15!black}},
ylabel={Price of Causality},
axis background/.style={fill=white}
]
\addplot [color=mycolor1, line width=1.5pt, forget plot]
  table[row sep=crcr]{%
0	nan\\
0.02	1\\
0.04	1\\
0.06	1\\
0.08	1\\
0.1	1\\
0.12	1\\
0.14	1\\
0.16	1\\
0.18	1\\
0.2	1\\
0.22	1\\
0.24	1\\
0.26	1\\
0.28	1\\
0.3	1\\
0.32	1\\
0.34	1\\
0.36	1\\
0.38	1\\
0.4	1\\
0.42	1\\
0.44	1\\
0.46	1\\
0.48	1\\
0.5	1\\
0.52	1\\
0.54	1\\
0.56	1\\
0.58	1\\
0.6	1\\
0.62	1\\
0.64	1\\
0.66	1\\
0.68	1\\
0.7	1\\
0.72	1\\
0.74	1\\
0.76	1\\
0.78	1\\
0.8	1\\
0.82	1\\
0.84	1\\
0.86	1\\
0.88	1\\
0.9	1\\
0.92	1\\
0.94	1\\
0.96	1\\
0.98	1\\
1	1\\
1.02	1.00114494997663\\
1.04	1.00225105138419\\
1.06	1.0033202484575\\
1.08	1.00435435782394\\
1.1	1.0053550788037\\
1.12	1.006324002728\\
1.14	1.00726262138318\\
1.16	1.00817233467458\\
1.18	1.00905445759305\\
1.2	1.00991022655714\\
1.22	1.01074080519521\\
1.24	1.01154728962441\\
1.26	1.01233071327665\\
1.28	1.01309205131662\\
1.3	1.01383222469119\\
1.32	1.01455210384583\\
1.34	1.01525251213943\\
1.36	1.01593422898576\\
1.38	1.01659799274667\\
1.4	1.01724450339981\\
1.42	1.01787442500079\\
1.44	1.01848838795849\\
1.46	1.01908699113932\\
1.48	1.01967080381568\\
1.5	1.02024036747171\\
1.52	1.02079619747837\\
1.54	1.02133878464887\\
1.56	1.02186859668414\\
1.58	1.02238607951751\\
1.6	1.02289165856644\\
1.62	1.023385739899\\
1.64	1.02386871132156\\
1.66	1.02434094339399\\
1.68	1.0248027903778\\
1.7	1.02525459112249\\
1.72	1.02569666989469\\
1.74	1.02612933715423\\
1.76	1.02655289028125\\
1.78	1.02696761425789\\
1.8	1.02737378230767\\
1.82	1.02777165649579\\
1.84	1.02816148829299\\
1.86	1.02854351910558\\
1.88	1.02891798077391\\
1.9	1.02928509604163\\
1.92	1.02964507899742\\
1.94	1.02999813549141\\
1.96	1.03034446352761\\
1.98	1.03068425363427\\
2	1.03101768921338\\
2.02	1.03134494687076\\
2.04	1.03166619672794\\
2.06	1.0319816027171\\
2.08	1.03229132285999\\
2.1	1.03259550953189\\
2.12	1.03289430971162\\
2.14	1.03318786521833\\
2.16	1.03347631293592\\
2.18	1.0337597850259\\
2.2	1.03403840912929\\
2.22	1.03431230855822\\
2.24	1.03458160247798\\
2.26	1.03484640607979\\
2.28	1.03510683074516\\
2.3	1.03536298420209\\
2.32	1.03561497067363\\
2.34	1.03586289101928\\
2.36	1.03610684286961\\
2.38	1.03634692075441\\
2.4	1.03658321622487\\
2.42	1.03681581796995\\
2.44	1.03704481192738\\
2.46	1.03727028138952\\
2.48	1.03749230710435\\
2.5	1.03771096737193\\
2.52	1.03792633813643\\
2.54	1.0381384930741\\
2.56	1.03834750367727\\
2.58	1.03855343933471\\
2.6	1.0387563674084\\
2.62	1.03895635330702\\
2.64	1.03915346055624\\
2.66	1.03920166172148\\
2.68	1.03923799244478\\
2.7	1.0392738112668\\
2.72	1.0393091289309\\
2.74	1.03934395588192\\
2.76	1.03350789965783\\
2.78	1.02642688992626\\
2.8	1.01944225701108\\
2.82	1.01255204659207\\
2.84	1.00575435683361\\
2.86	1\\
2.88	1\\
2.9	1\\
2.92	1\\
2.94	1\\
2.96	1\\
2.98	1\\
3	1\\
3.02	1\\
3.04	1\\
3.06	1\\
3.08	1\\
3.1	1\\
3.12	1\\
3.14	1\\
3.16	1\\
3.18	1\\
3.2	1\\
3.22	1\\
3.24	1\\
3.26	1\\
3.28	1\\
3.3	1\\
3.32	1\\
3.34	1\\
3.36	1\\
3.38	1\\
3.4	1\\
3.42	1\\
3.44	1\\
3.46	1\\
3.48	1\\
3.5	1\\
3.52	1\\
3.54	1\\
3.56	1\\
3.58	1\\
3.6	1\\
3.62	1\\
3.64	1\\
3.66	1\\
3.68	1\\
3.7	1\\
3.72	1\\
3.74	1\\
3.76	1\\
3.78	1\\
3.8	1\\
3.82	1\\
3.84	1\\
3.86	1\\
3.88	1\\
3.9	1\\
3.92	1\\
3.94	1\\
3.96	1\\
3.98	1\\
4	1\\
};
\end{axis}
\end{tikzpicture}%
\caption{The price of causality under different price ratio $\kappa$.}
\label{figure4}
\end{minipage}
\end{figure*}

\subsection{Energy Reserves}
We study the energy reserve procurement problem introduced in Section \ref{energyprocurementsection}. The operator procures reserves from a forward capacity market to balance supply and demand. The imbalance signal is revealed every 5 minutes during a 30 minute delivery window, i.e., $T=6$.

There are two types of balancing resources: a slow diesel generator and a gas turbine generator. Each generator has a capacity constraint and a ramp rate constraint. We set the generator capacities at $10MW$. Typically, the slow diesel generator has a ramp rate constraint of $7\%/\text{min}$ of its capacity, while the fast turbine generator can ramp up and down its full capacity in 5 minutes \cite{gonzalez2017review}. The nominal operation points for both generators is $5MW$. Let $s_i^t$ be the power deviations from this nominal value. The constraints for the slow generator are:
\begin{align*}
\begin{cases}
-5MW\leq s_1^t \leq 5MW, \quad \forall t=1,\ldots,6. \\
-3.5MW\leq |s_1^t-s_1^{t-1}|\leq 3.5MW.
\end{cases}
\end{align*}
The constraints for the fast generator are:
\begin{equation*}
-5MW\leq s_2^t \leq 5MW, \quad \forall t=1,\ldots,6. 
\end{equation*}
The sets $S_1$ and $S_2$ can be defined accordingly.

%
%

We use frequency regulation signals from the PJM market \cite{regulationdata} to construct $E$. The regulation signal is normalized between $-1$ and $1$. It is revealed every two seconds and it indicates the power imbalances of the grid when multiplied by the total reserve capacity. We use historic RegA data from the year 2017, and compute the average power imbalance in every 5 minutes. We divide the entire yearly 5-minute average trajectory into $17520$ segments. Each segment corresponds to a 30 minute interval, denoted $e_i\in R^6$. We view each segment as a sample of the imbalance signal, and our first goal is to construct $E$ from these samples so that the future imbalance signals lie in $E$ with very high probability.  We denote the entire data set as $\mathcal{D}=\{e_1,\ldots,e_{\text{17520}}\}$, and  partition $\mathcal{D}$  into a training set $\mathcal{D}_1 = \{e_1,\ldots,e_{\text{10000}}\}$ and a validation set $\mathcal{D}_2 = \{e_{\text{10001}},\ldots,e_{\text{17520}}\}$. We use $\mathcal{D}_1$ to construct $E$ and $\mathcal{D}_2$ to test the  model. A naive approach is to define $E = \text{conv }(\mathcal{D}_1)$. However, this approach has poor performance: only $76\%$ of the data from $\mathcal{D}_2$ is contained in $E$. We therefore inflate $E$ by a scaling factor $\delta$. Surprisingly, the coverage ratio increases to $93\%$ at $\delta=1.01$. This indicates that enlarging the set $E$ by $1\%$ can improve the performance significantly. Figure \ref{coverage}  shows the coverage ratio as a function of  $\delta$ allowing us to choose $\delta$ for a desired level of coverage. We can choose $\delta$ according to desired level of coverage based on Figure \ref{coverage}.

\begin{figure}[h!]%
\centering
%
%
\definecolor{mycolor1}{rgb}{0.60000,0.20000,0.00000}%
\begin{tikzpicture}

\begin{axis}[%
width=2.4in,
height=1in,
at={(3.216in,1.601in)},
scale only axis,
xmin=0,
xmax=0.1,
xtick={0,0.02,0.04,0.06,0.08,0.1},
xticklabels={{0},{2\%},{4\%},{6\%},{8\%},{10\%}},
xlabel style={font=\color{white!15!black}},
xlabel={Percentage of Inflation},
ymin=0.7,
ymax=1,
ytick={0.7,0.8,0.9,1},
yticklabels={{70\%},{80\%},{90\%},{1}},
ylabel style={font=\color{white!15!black}},
ylabel={Coverage Ratio},
axis background/.style={fill=white}
]
\addplot [color=mycolor1, line width=1.5pt, forget plot]
  table[row sep=crcr]{%
0	0.7501312335958\\
0.01	0.928215223097113\\
0.02	0.947637795275591\\
0.03	0.957742782152231\\
0.04	0.966929133858268\\
0.05	0.973622047244095\\
0.0600000000000001	0.979790026246719\\
0.0700000000000001	0.984514435695538\\
0.0800000000000001	0.987270341207349\\
0.0900000000000001	0.991732283464567\\
0.1	0.992782152230971\\
};
\end{axis}
\end{tikzpicture}%
\caption{The coverage ratio as a function of percentage of increase.}
\label{coverage}
\end{figure}
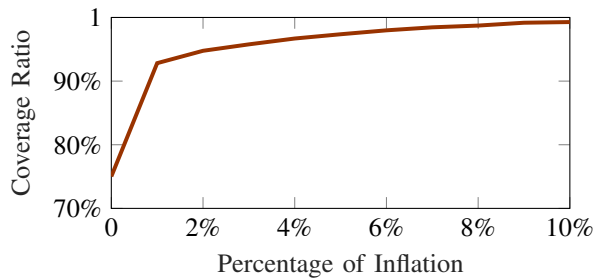

We explore the influence of unit prices affect the price of causality. Let $\pi_1$ and $\pi_2$ be the unit price of the diesel generator and turbine generator, respectively. 
Define $\kappa=\pi_2/\pi_1$. We fix $\pi_1=1$ and vary $\kappa$ from $0$ to $4$ in increments of $0.01$. The optimal procurement cost $J^*$ and our upper bound $\bar{J}$ are shown in Figure \ref{figure3}, and the price of causality is shown in \ref{figure4}. At the extremes, $\kappa<1$ or $\kappa>2.86$, one of the generators is too expensive, and only the cheaper resource is procured. The price of causality in these cases is 1. In the intermediate case, $1 < \kappa < 2.86$, both generators are procured and the price of causality can be greater then 1. From Figure \ref{figure4}, the upper bound on the price of causality is $1.04$. We note that this number may be economically significant: it is estimated that a $1\%$ increase of reserve requirements costs $50$ million dollars per year in California alone.

\section{conclusion}
This paper considered the problem of procuring diverse resources ex-ante to cover an uncertain demand signal. We considered two examples: reserve procurement in electricity market and instance procurement for cloud computing. Through these examples, we have shown that causality induces an additional procurement cost. We formulated the optimal resource procurement as a set containment linear program. An upper bound on the price of causality is obtained by restricting allocation policies to be affine. The exact price of causality is derived in some special cases, and all computations are based on linear programming. A cost-allocation mechanism is proposed. It satisfies the equity, budget balance, and fairness. Simulation results show interesting dependence of the price of causality on resource prices. Future research includes deriving lower bound on the price of causality, analyzing responsive loads, and exploring endogenous price discovery. We have assumed that resource prices $\pi_i$ are set by the seller. An intriguing possibility is to study general forward markets for trading  diverse resources modeled convex sets.

\bibliographystyle{unsrt}
\bibliography{resourceprocurement}

\newpage
\section*{Appendix}

\subsection*{\bf{A: Proof of Theorem \ref{noncausalresourceprocurement}}}

\begin{proof}
Comparing (\ref{resourceprocurement}) with (\ref{resourceprocurement_linear}), it suffices to show that the polytope containment constraint (\ref{polytopecontain}) is equivalent to (\ref{conservationofall_linearcc}) and (\ref{feasibleset_icc}). As $E$ and the Minkowski sum of $\alpha_iS_i$ are both convex, (\ref{polytopecontain}) is equivalent to (\ref{intermediate}).  
Denote $q_{i,k}\in \mathbb{R}^T$ as the allocation of $v_k$ to the $i$th resource, then the constraint (\ref{intermediate}) can be written as follows:
\begin{align}
\label{intermediatecc2}
\begin{cases}
v_k=\sum_{i=1}^N q_{i,k}, \quad \forall k\in [K], \\
q_{i,k}\in \alpha_i S_i, \quad \forall k\in [K], i\in [N]. 
\end{cases}
\end{align} 
Clearly, (\ref{intermediatecc2}) is equivalent to (\ref{conservationofall_linearcc}) and (\ref{feasibleset_icc}). This completes the proof.
\end{proof}

\subsection*{\bf{B: Proof of Proposition \ref{solutiontoexample}}}
\begin{proof}
To prove that $\alpha_1^*  = \alpha_2^* = 1$, we first show that $J^*\geq 4$, and $\alpha_1=\alpha_2=1$ is the only possible solution that attains $J^*=4$. Second, we show that $\alpha_1=\alpha_2=1$ satisfies the polytope containment constraint (\ref{polytopecontainexample}). 

To cover $(1,1,4)$, the total maximum rate $\alpha_1 r_1+\alpha_2 r_2$ is at least 4 (otherwise covering $e^3$ is not possible). Therefore, a necessary condition is:
\begin{equation}
\label{condition1}
3\alpha_1 +\alpha_2 \geq 4.
\end{equation}
In addition, to cover $(1,1,4)$, the total capacity $\alpha_1C_1+\alpha_2C_2$ is at least $1+1+4$. Therefore, another necessary condition is:
\begin{equation}
\label{condition2}
3\alpha_1 +3\alpha_2 \geq 6.
\end{equation}
Combining (\ref{condition1}) and (\ref{condition2}), the optimal value to the following problem is an upper bound for $J^*$:
\begin{equation}
\label{upperbound4example}
\hspace{-0cm} \min_{\alpha_1,\alpha_2} 3\alpha_1+\alpha_2
\end{equation}
\begin{subnumcases}{\label{constraint4upperbound}}
3\alpha_1 +\alpha_2 \geq 4,   \\
3\alpha_1 +3\alpha_2 \geq 6.
\end{subnumcases}
The optimal value of (\ref{upperbound4example}) is $4$, and the unique solution is $\alpha_1=\alpha_2=1$. Therefore, $\alpha_1=\alpha_2=1$ is necessary. 

Next we show that $\alpha_1=\alpha_2=1$ is sufficient, i.e., the polytope containment constraint (\ref{polytopecontainexample}) is satisfied.  To see this, note that both $E$ and $S_1\oplus S_2$ are convex. Therefore, (\ref{polytopecontainexample}) is equivalent to the vertices of $E$ contained in $S_1 \oplus S_2$. This trivially holds for the vertex $(0,0,0)$ and the vertex $(1,1,-2)\in S_1$. The other vertex $(1,1,4)$ can be decomposed into $(0,0,3)\in S_1$ and $(1,1,1)\in S_2$. This completes our argument.

To show that the optimal asset mix is insufficient to causally cover $E$, we proceed as follows. Let $\phi_i^t(\cdot)$ be the allocation of signal $e^t$ to resource $i$. The first two elements of $(1,1,-2)$ and $(1,1,4)$ are the same. Since $e^t$ is revealed causally, the allocation for the first two periods should also be the same for $(1,1,-2)$ and $(1,1,4)$.  To cover the signal $(1,1,4)$, the unique allocation is to exclusively use the second battery for the first two periods, and then use both batteries at the third period, i.e., $\phi_1^1(\cdot)=\phi_1^2(\cdot)=0$, $\phi_1^3(\cdot)=3$, and $\phi_2^1(\cdot)=\phi_2^2(\cdot)=\phi_2^3(\cdot)=1$.  On the other hand, if we exclusively use the second battery for the first two periods, then $(1,1,-2)$ cannot be covered, since at time $3$, the maximum discharging rate of the second battery is $1<2$. This completes the proof. 
\end{proof}

\subsection*{\bf{D: Proof of Proposition \ref{meritorderproposition}}}
\begin{proof}
For each $i\in [N]$, define $k_i$ as the smallest $\alpha_i$ such that $E\subseteq \alpha_i S_i$. Assume that $k_i$ exits for all $i\in [N]$. This assumption has no loss of generality,  since if $k_i$ does not exist for some $i$,  then there is no $\alpha_i$ such that $\beta_i E \subseteq \alpha_iS_i$ when $\beta>0$. In this case,   the optimal solution to (\ref{causality4proportaional}) has to  satisfy $\alpha_i=\beta_i=0$. This means the resource $i$ is neither used nor procured, so we can remove it from $[N]$

Based on the definition of $k_i$, $E\subseteq \alpha_i S_i$ is  equivalent to $\alpha_i\geq k_i$, and the constraint (\ref{polytopecontain_causal_prop}) is equivalent to $\alpha_i\geq k_i\beta_i$. Plug this into (\ref{polytopecontain_causal_prop}), then the resource procurement problem (\ref{causality4proportaional}) becomes:
\begin{align}
\label{intermproposition5}
&\min_{\alpha_1,\ldots,\alpha_N, \beta} \sum_{i\in [N]} \alpha_i \pi_i  \\
&\begin{cases}
\alpha_i\geq k_i \beta_i, \quad \forall i\in [N]  \\ \nonumber
\sum_{i\in [N]} \beta_{i}=1, \\ \nonumber
0\leq \beta_i \leq 1, \forall i\in [N]. \nonumber 
\end{cases}
\end{align}
Clearly, at the optimal solution, we have $\alpha_i=k_i\beta_i$ for all $i\in [N]$. Then  (\ref{intermproposition5}) is equivalent to:
\begin{align*}
&\min_{\{\beta_i, i\in [N]\}} \sum_{i\in [N]} k_i \pi_i \beta_i \\
&\begin{cases}
\sum_{i\in [N]} \beta_{i}=1,\\ \nonumber
0\leq \beta_{i}\leq 1, \quad \forall i\in [N].
\end{cases}
\end{align*}
Rank all resource in the ascending order of $k_i\pi_i$.  At the optimal solution,  only resources with the smallest $k_i\pi_i$ is selected. This competes the proof.
\end{proof}

\subsection*{\bf{E: Proof of Theorem \ref{linearupperbound}}}
\begin{proof}
Since the allocation policy is affine, the resource procurement problem can be written as follows:
\begin{equation}
\label{resourceprocurement_causal_linear_intermediate}
\hspace{-4.5cm} \min \sum_{i=1}^{N} \alpha_i \pi_i
\end{equation}
\begin{subnumcases}{\label{constraint4causal_intermediate}}
e^t=\sum_{i=1}^N \left(C_i^t[e^1,\ldots,e^t]^T+D_i^t\right), \quad \forall t\in [T], \label{conservationofall_linear_1}\\
A_i[\phi_i^1(e^1),\ldots,\phi_i^T(e^{1:T})]^T\leq \alpha_iB_i, \forall i\in [N], \\
\alpha_i\geq 0,  \quad \forall i\in [N], \label{feasibleset_i_1}\\
\forall e\in E. \label{robustconstrinat}
\end{subnumcases}
where $\phi_i^t(\cdot)$ satisfies (\ref{affinepolicydef}). 
Note that constraints (\ref{conservationofall_linear_1})-(\ref{feasibleset_i_1}) is linear with respect to $e$. Therefore, (\ref{conservationofall_linear_1})-(\ref{feasibleset_i_1}) holds for all $e\in E$ if and only if it holds for all vertices of $E$. This is exactly  (\ref{constraint4causal}).
\end{proof}

\subsection*{\bf{F: Proof of Proposition \ref{poais1_3}}}
\begin{proof}
Let $S=\delta_iS_i$. The polytope containment constraint (\ref{polytopecontain}) becomes:
\begin{equation}
\label{identialrourceproofinter}
E\subseteq \dfrac{\alpha_1}{\delta_1}S \oplus \cdots \oplus \dfrac{\alpha_N}{\delta_N}S.  
\end{equation}
Let $\sigma_i=\alpha_i/\delta_i$, then (\ref{identialrourceproofinter}) is equivalent to:
\begin{equation}
\label{identialproofimport}
E\subseteq  \sum_{i=1}^N \sigma_i S.
\end{equation}
Consider the following allocation policy:
\begin{equation}
\label{allocationpolicyidential}
\phi_i^t(e^{1:t})=\dfrac{\sigma_i}{\sum_{i=1}^N \sigma_i} e^t.
\end{equation}
Clearly, (\ref{allocationpolicyidential}) is causal. To prove it covers all $e\in E$, we note that (\ref{identialproofimport}) is the same as $\dfrac{\sigma_i}{\sum_{i=1}^N \sigma_i} E \subseteq \sigma_i S$. Therefore:
\begin{equation*}
\dfrac{\sigma_i}{\sum_{i=1}^N \sigma_i}  e \in \sigma_i S=\alpha_i S_i, \quad \forall e\in E.
\end{equation*}
This completes the proof.
\end{proof}

\subsection*{\bf{G: Proof of Proposition \ref{poais1_2}}}
\begin{proof}
Let $\alpha^*=(\alpha_1^*,\ldots,\alpha_N^*)$ be the optimal solution to the resource procurement problem (\ref{resourceprocurement}). It suffices to construct a causal policy that covers all $e\in E$ under $\alpha^*$. To this end, define $\big(\phi_1^t(e^{1:t}), \ldots, \phi_N^t(e^{1:t})\big) $ as any  vector that:
\begin{align*}
\begin{cases}
e^t=\sum_{i=1}^N \phi_i^t(e^{1:t}), \quad \forall t\in [T], \\
\alpha_i^*\underline{s}_i^t\leq \phi_i^t(e^{1:t})  \leq \alpha_i^*\bar{s}_i^t, \quad \forall i\in [N], t\in [T].
\end{cases}
\end{align*}
The above allocation policy is causal, and it exists. If it does not exist,  then it indicates that there is some $t$ such that $e^t<\sum_{i=1}^N \alpha_i^*\underline{s}_i^t$ or $e^t>\sum_{i=1}^N \alpha_i^*\bar{s}_i^t$. This contradicts  (\ref{polytopecontain}).  For the same reason, it covers all $e\in E$. This completes the proof.
\end{proof}

\subsection*{\bf{H: Proof of Proposition \ref{poais1}}}
\begin{proof}
It suffices to construct an example with $PoC>1$. Consider to use two batteries to cover signals in $E=\{e|0\leq e^1 \leq 1, 0\leq e^2 \leq 1, -5 \leq e^3 \leq 7 \}$. The capacity of each battery is $C_1=9$ and $C_2=5$, and the maximum charging/discharging rate is  $r_1=2$, $r_2=5$. At time $0$, the initial state of charge is $33\%$ and $40\%$, respectively. Let $p_1=2$ and $p_2=5$. In this case, we can show that the optimal solution to (\ref{resourceprocurement}) is $\alpha_1=\alpha_2=1$. However, we can also show that when $\alpha_1=\alpha_2=1$, no causal policy can be found to cover both $(0,0,-5)$ and $(0,0,7)$. The proof is similar to that in Section III-A, and is therefore omitted.
\end{proof}

\subsection*{\bf{I: Proof of Theorem \ref{proposition4exactpoa}}}

To prove Theorem \ref{proposition4exactpoa}, we need the following lemma:
\begin{lemma}
\label{lemmaforbigtheor}
Under the assumptions of Theorem \ref{proposition4exactpoa}, there exist $T_0$ such that for all $T> T_0$,  there exists $(e^1,\ldots,e^{T-1})$ that satisfies $(e^1,\ldots,e^{T-1},\sum_i C_i/2) \in E$ and $(e^1,\ldots,e^{T-1},-\sum_i C_i/2) \in E$. 
\end{lemma}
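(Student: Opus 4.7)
The plan is to construct, for $T$ large enough, an explicit prefix $(e^1,\ldots,e^{T-1})$ together with two different decompositions $e^k = \sum_{i=1}^N s_i^{(j),k}$ with $s_i^{(j)} \in S_i$ for $j=1,2$, such that the $j=1$ decomposition extends to a full signal ending with aggregate $+\sum_i C_i/2$ and the $j=2$ decomposition extends to one ending with $-\sum_i C_i/2$. Because $E=S_1\oplus\cdots\oplus S_N$, both extended signals then lie in $E$, proving the claim.

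To fix the final step, first choose $a_1,\ldots,a_N$ with $a_i\in[0,r_i]$ and $\sum_i a_i = \sum_i C_i/2$; this is feasible by the hypothesis $\sum_i C_i \leq 2\sum_i r_i$. Set $s_i^{(1),T}=+a_i$ and $s_i^{(2),T}=-a_i$, so the last-period aggregates are $\pm\sum_i C_i/2$. For these to respect the capacity bound, the $i$th battery must finish the prefix at state of charge $\rho_i^{(1)}=C_i-a_i$ in scenario~1 and $\rho_i^{(2)}=a_i$ in scenario~2; both targets lie in $[0,C_i]$. Build the prefix by constant-rate interpolation from the initial state $\theta_i C_i$ to the target,
\begin{equation*}
s_i^{(j),k} \;=\; \frac{\rho_i^{(j)} - \theta_i C_i}{T-1}, \qquad k=1,\ldots,T-1,\; j=1,2,
\end{equation*}
and define $e^k := \sum_i s_i^{(j),k}$, provided this is independent of $j$.

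The critical identity that justifies the previous step is
\begin{equation*}
\sum_{i=1}^N \rho_i^{(1)} \;=\; \sum_i(C_i-a_i) \;=\; \tfrac{1}{2}\sum_i C_i \;=\; \sum_i a_i \;=\; \sum_i \rho_i^{(2)},
\end{equation*}
which forces $\sum_i s_i^{(1),k} = \sum_i s_i^{(2),k}$ at every prefix step, so $e^k$ is well defined. For feasibility, the state-of-charge trajectories are linear with both endpoints in $[0,C_i]$ and hence remain in $[0,C_i]$; the per-step rate bound satisfies $|s_i^{(j),k}| \leq C_i/(T-1) \leq r_i$ once $T-1 \geq C_i/r_i$, while the final-step rate constraint $a_i\leq r_i$ holds by construction and the final states $C_i$ (scenario~1) and $0$ (scenario~2) are in $[0,C_i]$. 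Taking $T_0 := \max_i\lceil C_i/r_i\rceil + 1$ makes everything feasible whenever $T > T_0$.

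The main subtlety is recognizing that the target states $\rho_i^{(j)}$ are essentially pinned by the problem data: the componentwise constraints $\rho_i^{(1)} \leq C_i - a_i$ and $\rho_i^{(2)} \geq a_i$, together with the equality of prefix aggregates $\sum_i \rho_i^{(1)} = \sum_i \rho_i^{(2)}$ (forced by the shared prefix signal) and the identity $\sum_i(C_i-a_i) = \sum_i a_i$, tighten both inequalities into equalities. Once this symmetry is uncovered, the single-phase linear construction above immediately produces a prefix whose two feasible allocations differ only in which fraction of the common aggregate is routed to each battery, yielding the two extensions required by the lemma.
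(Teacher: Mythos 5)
Your proof is correct, and it takes a genuinely different route from the paper's. The paper constructs the prefix implicitly via two greedy ``stacking'' policies: one that charges batteries in priority order while capping each battery's accumulated charge at $r_i$ (so that all of $\sum_i C_i/2$ can be discharged in the final step), and another that caps each at $C_i-r_i$ (so that $\sum_i C_i/2$ can be absorbed in the final step), using a nonnegative charging signal with $\sum_{t\le T-1}e^t=\tfrac12\sum_i C_i$ and leaving $T_0$ only loosely specified. You instead fix a single split $a_i\in[0,r_i]$ with $\sum_i a_i=\tfrac12\sum_i C_i$ (feasible precisely by the hypothesis $\sum_i C_i\le 2\sum_i r_i$), take one constant prefix signal, and exhibit two explicit constant-rate decompositions steering battery $i$ to the symmetric targets $C_i-a_i$ and $a_i$; the key identity $\sum_i(C_i-a_i)=\sum_i a_i$ guarantees both decompositions aggregate to the same prefix. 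This buys you an explicit threshold $T_0=\max_i\lceil C_i/r_i\rceil+1$, trivial feasibility checks (linear state trajectories with endpoints in $[0,C_i]$), and none of the paper's case handling for the initial charge $\theta_i$ or the ordering of batteries. The only cosmetic blemish is the phrase ``must finish the prefix at state of charge $\rho_i^{(1)}=C_i-a_i$'': the capacity bound only forces $x_i^{T-1}\le C_i-a_i$ (and $\ge a_i$ in the other scenario), but since you only need a sufficient construction and your final paragraph correctly explains why the shared prefix pins these to equalities, nothing is lost.
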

\begin{proof}
since $E=S_1\oplus S_2 \oplus \cdots \oplus S_N$,  it suffices to construct a sequence $(e^1,\ldots, e^{T-1})$ and two allocation policies  to cover $(e^1,\ldots,e^{T-1},\sum_i C_i/2)$ and $(e^1,\ldots,e^{T-1},-\sum_i C_i/2)$ using unit batteries.
Without loss of generality,  assume that all batteries are empty at time $0$, i.e., $\theta_i=0$. If $\theta_i>0$, we can construct a sequence of signals to empty all batteries first and then concatenate this sequence with $(e^1,\ldots,e^{T-1})$.

Consider the first allocation policy to satisfy the following: 
\begin{itemize}
\item  Each battery $i$ is charged by no more than $r_i$ for all $t \leq T-1$, i.e.,  $\sum_{k=1}^t s_i^k\leq r^i$, $\forall t\leq T-1$,
\item The ($i+1$)th battery is used only if the $i$th battery is already charged by $r_i$, i.e., $s_i^t>0$ only if $s_j^t=r_j$ for all $j<i$.
\end{itemize}
Under this policy, the maximum total power the batteries can provide at time $t$ is $\sum_{i=1}^{N} r_i- \sum_{k=1}^{t-1} e^k$. On the other hand, consider a signal sequence  $(e^1,\ldots,e^{T-1})$  that satisfies:
\begin{align}
\label{constructsequence}
\begin{cases}
 \sum_{t=1}^{T-1} e^t=\dfrac{1}{2}\sum_{i=1}^N C_i, \\
 e^t\geq 0,  \quad \forall 0\leq t \leq T-1.
 \end{cases}
\end{align}
Clearly, for any $t\leq T-1$, as long as  $(e^1,\ldots,e^{T-1})$  satisfies (\ref{constructsequence}), we have
$e^t\leq \sum_{i=1}^N C_i/2-\sum_{k=1}^{t-1} e^k$. Since $\sum_{i=1}^{N} r_i\geq \sum_{i=1}^N C_i/2$, we have:
\begin{equation*}
e^t\leq \sum_{i=1}^N C_i/2-\sum_{k=1}^{t-1} e^k \leq \sum_{i=1}^{N} r_i- \sum_{k=1}^{t-1} e^k.
\end{equation*}
Therefore, the first policy covers all  $(e^1,\ldots,e^{T-1})$ that satisfies (\ref{constructsequence}). In addition, since each battery $i$ are charged by at most $r_i$ at the first $T-1$ steps, all batteries can be discharged to empty at time $T$. Therefore, it is able to cover $-\sum_{i=1}^N C_i/2$ at time $T$.

The second policy satisfies the following conditions: 
\begin{itemize}
\item  Each battery $i$ is charged by no more than $C_i-r_i$ for all $t \leq T-1$. Formally, $\sum_{k=1}^t s_i^k\leq C_i-r^i$, $\forall t\leq T-1$,
\item The ($i+1$)th battery is used only if the $i$th battery is already charged by $C_i-r_i$, i.e., $s_i^t>0$ only if $s_j^t=C_j-r_j$ for all $j<i$.
\end{itemize}
Trivially, under this policy, we can find a $T_0>0$ so that $(e^1,\ldots,e^{T_0})$ can be covered by the unit batteries and satisfies:
\begin{align*}
\begin{cases}
 \sum_{t=1}^{T_0} e^t=\dfrac{1}{2}\sum_{i=1}^N C_i, \\
 e^t\geq 0,  \quad \forall 0\leq t \leq T_0.
 \end{cases}
\end{align*}
For any $T> T_0$, let $(e^1,\ldots,e^{T-1})=(e^1,\ldots,e^{T_0}, 0,\ldots,0)$. Since $(e^1,\ldots,e^{T_0})$ is covered by unit batteries,  $(e^1,\ldots,e^{T-1})$ is also covered. 
In addition, since each battery $i$ is charged by at most $C_i-r_i$,  the policy  also guarantees to cover $\sum_{i=1}^N C_i/2$ at time $T$. 
This completes the proof of Lemma \ref{lemmaforbigtheor}.
\end{proof}

Using Lemma \ref{lemmaforbigtheor}, the proof of Theorem \ref{proposition4exactpoa} is as follows:

\begin{proof}
It suffices to show that (\ref{constraint4poaaslinearprogramming}) is both necessary and sufficient for  (\ref{constraint4causal_formualtion}). To prove necessity, we first note that (\ref{constraint1forpoaaslp}) is clearly necessary: if (\ref{constraint1forpoaaslp}) is not satisfied, we can construct some signal such that $e^t=\sum_{i=1}^N r_i$ for some $t$, then this signal can not be covered. To show that (\ref{constraint2forpoaaslp}) is also necessary, we note that according to Lemma \ref{lemmaforbigtheor}, there exist $T_0$ and $(e^1,\ldots,e^{T-1})$ such that $(e^1,\ldots,e^{T-1},\sum_i C_i/2) \in E$ and $(e^1,\ldots,e^{T-1},-\sum_i C_i/2) \in E$. Since the first $T-1$ elements of the signals are the same, a causal policy should make the same allocation decision for these two cases in the first $T-1$ steps. Therefore, at time $T$, the batteries should satisfy both $\sum_iC_i/2$ and $-\sum_iC_i/2$. Let $x_i^t$ denote the state of charge of battery $i$ at time $t$, i.e., $x_i^t=\theta_iC_i+\sum_{k=1}^t s_i^t$, then the following conditions should hold:
\begin{numcases}{}
\sum_{i=1}^N \alpha_i \min (r_i,C_i-x_i^{T-1} ) \geq \dfrac{1}{2} \sum_{i=1}^N C_i,  \label{conditions4proof1}\\
\sum_{i=1}^N \alpha_i \min (r_i,x_i^{T-1} ) \geq \dfrac{1}{2} \sum_{i=1}^N C_i. \label{conditions4proof2}
\end{numcases}
where $\min(r_i,C_i-x_i^{T-1})$ is the maximum energy battery $i$ can absorb at time $T$, and $\min(r_i,x_i^{T-1})$ is the maximum energy battery $i$ can provide at time $T$. 	Clearly, (\ref{conditions4proof1}) is necessary for the policy to cover $\sum_{i=1}^N C_i/2$ at time $T$, and (\ref{conditions4proof2}) is necessary for the policy to cover $-\sum_{i=1}^N C_i/2$ at time $T$. Adding up (\ref{conditions4proof1}) and (\ref{conditions4proof2}), and using the following inequality (can be verified by discussing different cases):
\begin{equation}
\min(r_i,C_i-x_i^{T-1})+\min(r_i,x_i^{T-1})\leq \min(2r_i,C_i),
\end{equation}
we obtain
(\ref{constraint2forpoaaslp}) as a necessary condition. Therefore,  (\ref{constraint4poaaslinearprogramming}) is necessary for  (\ref{constraint4causal_formualtion}).

To prove sufficiency, we construct a causal policy that covers all signals in $E$ when (\ref{constraint4poaaslinearprogramming})  holds.  Rank all batteries in a decreasing order of $C_i/r_i$. Without loss of generality, assume battery $1$ has the largest $C_i/r_i$, battery $2$ has the second largest, and so on. Divide $[N]$ into three subsets: $\mathcal{N}_1=\{i\in [N]|C_i=r_i \}$, $\mathcal{N}_2=\{i\in [N]|r_i<C_i<2r_i \}$, and $\mathcal{N}_3=\{i\in [N]|C_i\geq 2r_i \}$. 

To enhance readability, we first prove the result for a simple example, then we discuss how to generalize the proof to all other cases. As an example, consider  a group of three batteries with the following parameter: $C_1=r_1$, $C_2\geq 2r_2$, and  $C_3 \geq 2r_3$. We propose a causal allocation policy that divides each battery into a few blocks, and uses each block according to an alternating order to construct a virtual battery. The idea is illustrated in Figure \ref{allocation}, where the first battery contributes one block $\alpha_1 r_1$, the second battery contributes two identical blocks $\alpha_2 r_2$, and the third battery contributes two identical blocks $\alpha_3 r_3$. These blocks are arranged in the following order: $\alpha_3 r_3$, $\alpha_2 r_2$, $\alpha_1 r_1$, $\alpha_3 r_3$, $\alpha_2 r_2$, as illustrated in Figure \ref{allocation}. 
\begin{figure}[t]%
\centering
\includegraphics[width = 0.4\linewidth]{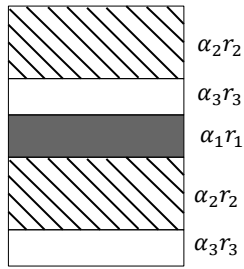}
\caption{The causal allocation: battery $1$ is one block $\alpha_1r_1$, battery $2$ is divided into 2 identical blocks $\alpha_2 r_2$, and battery $3$ is divided into two identical  blocks $\alpha_3 r_3$. These blocks are arranged in the following order: $\alpha_3r_3$, $\alpha_2r_2$, $\alpha_1r_1$, $\alpha_3r_3$, $\alpha_2r_2$.}
\label{allocation}
\end{figure}
Formally, the allocation in Figure \ref{allocation} can be described as one that respects the following conditions:
\begin{align}
\label{causalpolicy}
\begin{cases}
x_2^t= 0 \text{ unless } x_3^t=\alpha_3r_3, \\
x_1^t=0 \text{ unless } x_2^t\geq \alpha_2r_2  \text{ and } x_3^t\geq \alpha_3 r_3, \\
x_3^t\leq \alpha_3r_3 \text{ unless } x_1^t=\alpha_1C_1, \\
x_2^t\leq \alpha_2r_2 \text{ unless }  x_1^t=\alpha_1C_1 \text{ and }x_3^t=2\alpha_3r_3 , \\
x_2^t\leq 2\alpha_2r_2, \quad x_3^t\leq 2\alpha_3r_3.
\end{cases}
\end{align}
Clearly, this single battery has capacity $\alpha_1C_1+2\alpha_2r_2+2\alpha_2r_3$. Due to (\ref{constraint2forpoaaslp}), it is greater than $\sum_iC_i$. In addition, the charging/discharging rate is $\sum_i\alpha_ir_i$. Due to (\ref{constraint1forpoaaslp}), it is greater than $\sum_ir_i$. Therefore, it covers all signals in $E=S_1\oplus S_2 \oplus S_3$.

Next we generalize the proof idea to all possible cases. Note that in the example, $\mathcal{N}_1=\{1\}$, $\mathcal{N}_2=\emptyset$, and $\mathcal{N}_3=\{2,3\}$. We simply need to generalize the result to all possible combinations of $\mathcal{N}_1$, $\mathcal{N}_2$ and $\mathcal{N}_3$. First, we note that if there are more than one battery in $\mathcal{N}_1$, there is no difference, as these batteries can be used as a single one. Second, if there are more than two batteries in $\mathcal{N}_3$, then the proof idea of the example simply goes through. Third, if $\mathcal{N}_2$ is non-empty, i.e., $j\in \mathcal{N}_2$, then we remove battery $j$ from $[N]$, divide it into two batteries, and then place these two batteries back to $[N]$. In particular, denote these two new batteries as $i=N+1$ and $i=N+2$, such that $\alpha_{N+1}=\alpha_{N+2}=\alpha_j$, $C_{N+1}=r_{N+1}=2r_i-C_i$ and $C_{N+2}=2C_i-2r_i$, $r_{N+2}=C_i-r_i$. Note that $C_{N+1}+C_{N+2}=C_{j}$ and $r_{N+1}+r_{N+2}=r_j$. Therefore, it is a division of battery $j$.  
After this operation, we have $N+1\in \mathcal{N}_1$ and $N+2\in \mathcal{N}_3$.  Apply this operation for all  $i\in \mathcal{N}_2$ until $\mathcal{N}_2=\emptyset$, then we are back to the case of the example. This completes the proof.
\end{proof}

\begin{IEEEbiography}
    [{\includegraphics[width=1in,height=1.25in,clip,keepaspectratio]{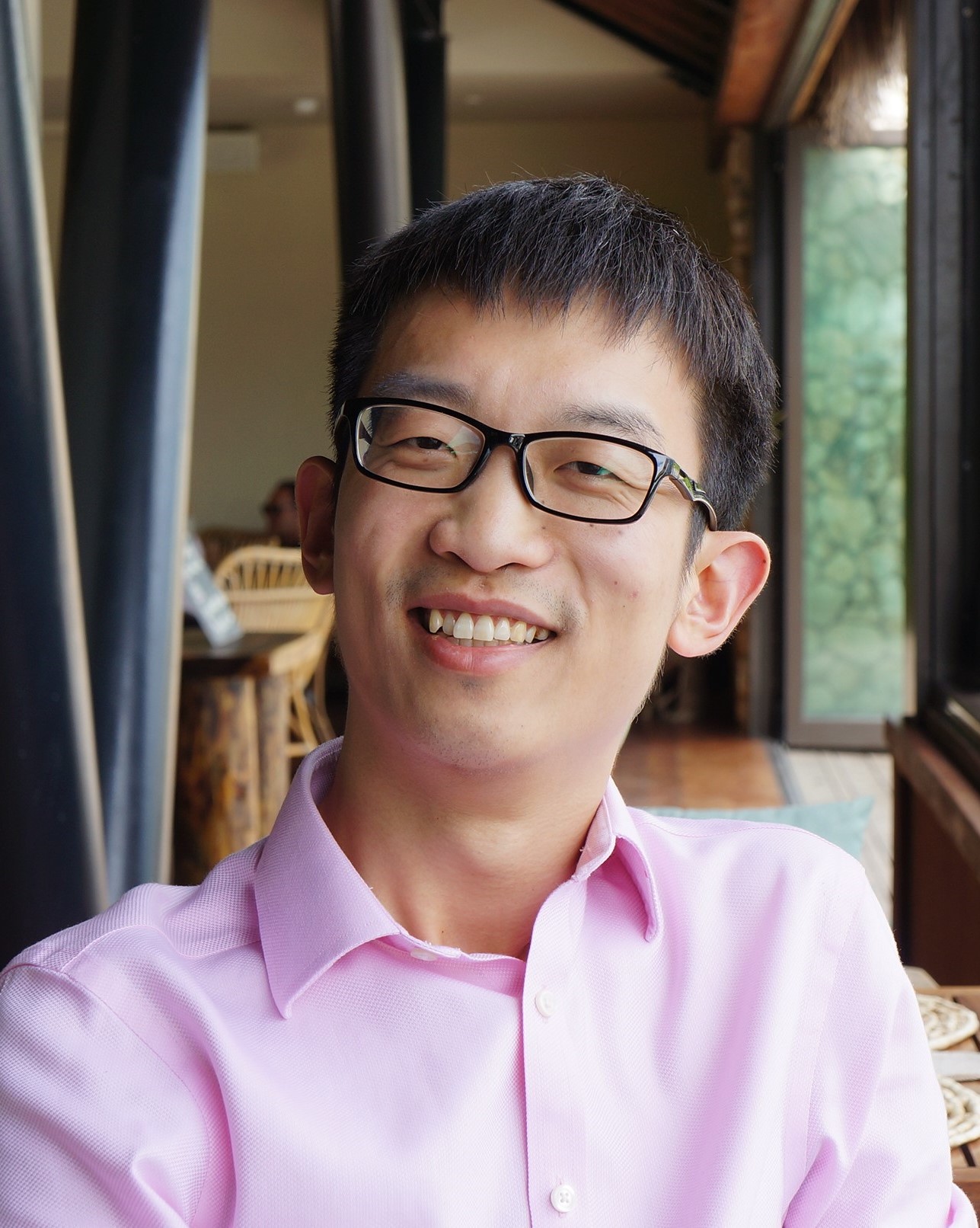}}]{Sen Li} is a postdoctoral fellow in the Department of Mechanical Engineering at University of California, Berkeley, working with Prof. Kameshwar Poolla and Prof. Pravin Varaiya. He received his B.S. from Zhejiang University, and Ph.D. from The Ohio State University. Previously, Dr. Li was an intern at Pacific Northwestern National Laboratory, and a visiting student at Harvard University. His research lies in the intersection of control, optimization and game theory with applications in large-scale cyber-physical systems. He is particularly interested in renewable energy integration and intelligent transportation systems. He is a finalist of Best Student Paper Award at 2018 European Control Conference.
\end{IEEEbiography}
\begin{IEEEbiography}
[{\includegraphics[width=1in,height=1.25in,clip,keepaspectratio]{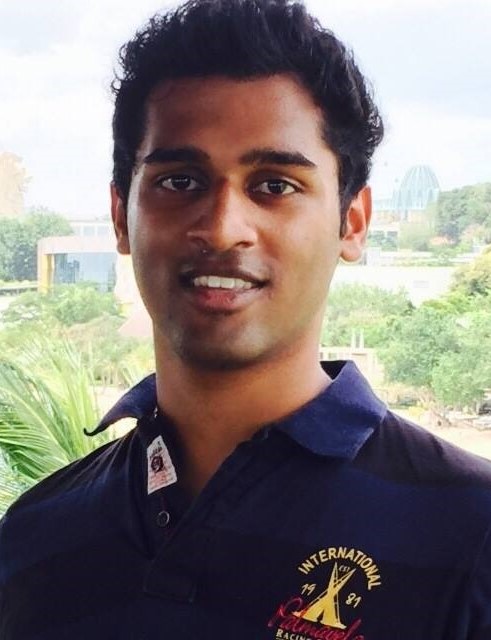}}]{Akhil Shetty }
is currently a Ph. D. student in Electrical Engineering and Computer Sciences at University of California, Berkeley. He received both his B. Tech and M. Tech degrees in Electrical Engineering from the Indian Institute of Technology, Bombay in 2017.  His current research interests center on developing control, optimization and statistical tools to solve problems in energy systems, traffic safety and transportation economics. He is a recipient of the Berkeley Graduate Fellowship for 2017-19. He was a Best Student Paper Finalist at ECC 2018 and received the Best Student Paper Award at IEEE SPCOM 2014.
\end{IEEEbiography}
\begin{IEEEbiography}
    [{\includegraphics[width=1in,height=1.25in,clip,keepaspectratio]{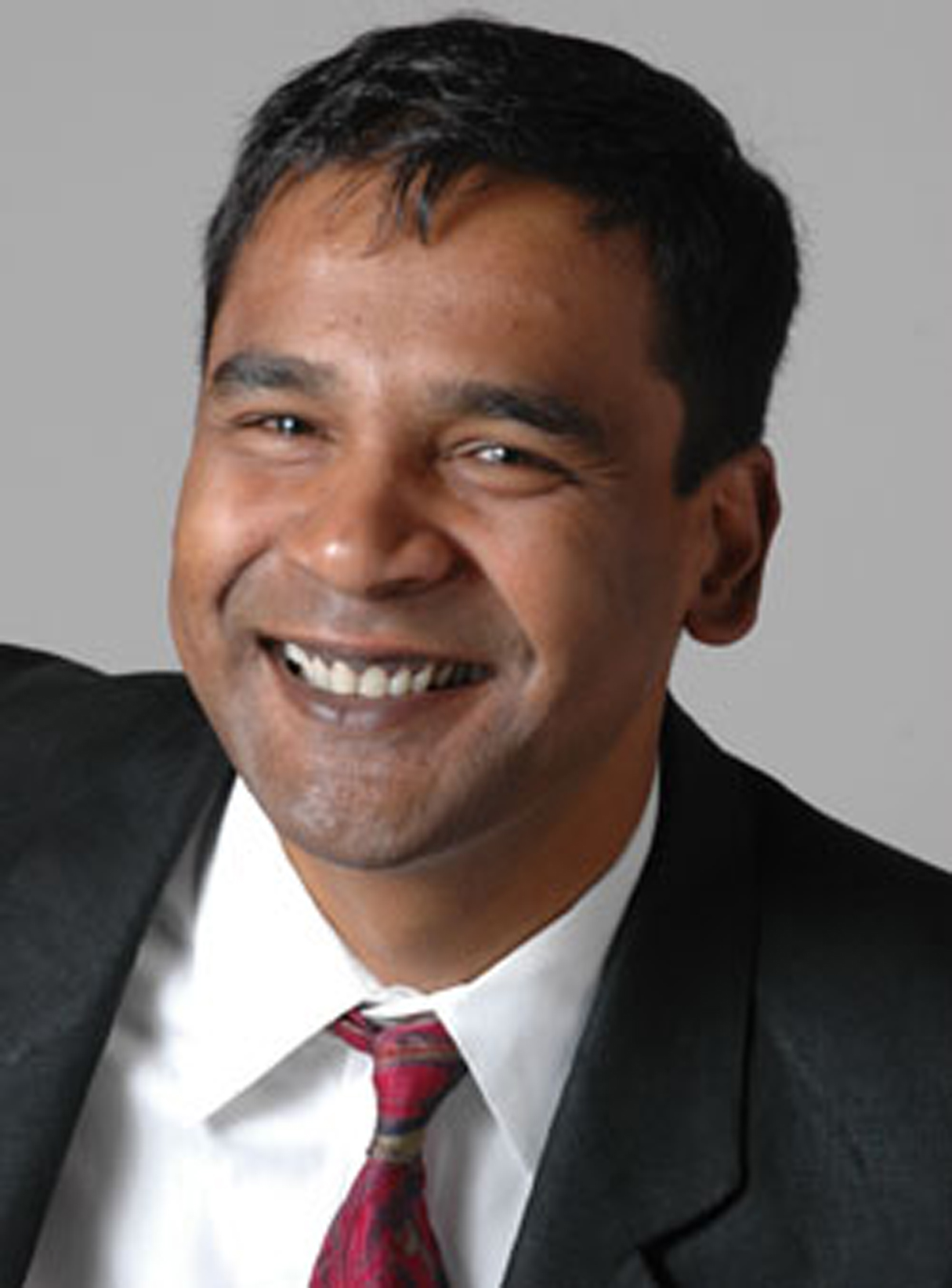}}]
  {Kameshwar Poolla} is the Cadence Design Systems Distinguished Professor, in Department of Electrical Engineering and Computer Sciences and Department of Mechanical Engineering at University of California, Berkeley. He received his B.Tech. degree from the Indian Institute of Technology, Bombay in 1980 and his Ph.D. from the Center for Mathematical System Theory, University of Florida, Gainesville in 1984. Poolla's research interests include many aspects of future energy systems including economics, security, and commercialization. He was awarded the 1984 Outstanding Dissertation Award by the University of Florida; the 1988 NSF Presidential Young Investigator Award; the 1993 Hugo Schuck Best Paper Prize; the 1994 Donald P. Eckman Award; a 1997 JSPS Fellowship; the 1997 Distinguished Teaching Award from the University of California, Berkeley; and the 2004 and 2007 IEEE Transactions on Semiconductor Manufacturing Best Paper Prizes; and the 2009 IEEE CSS Transition to Practice Award.
\end{IEEEbiography}

\begin{IEEEbiography}
    [{\includegraphics[width=1in,height=1.25in,clip,keepaspectratio]{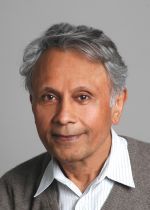}}]
  {Pravin Varaiya} is the Nortel Networks Distinguished Professor in the Department of Electrical Engineering and Computer Sciences at the UC Berkeley. From 1975 to 1992 he was also Professor of Economics at Berkeley. From 1994 to 1997 he was Director of the California PATH program, a multi-university research program dedicated to the solution of California's transportation problems.
Varaiya has held a Guggenheim Fellowship and a Miller Research Professorship. He received Honorary Doctorates from L'Institut National Polytechnique de Toulouse and L'Institut National Polytechnique de Grenoble. He received the IEEE Control Systems Society Field Award, the Hendrik W. Bode Lecture Prize, the Richard E. Bellman Control Heritage Award, the IEEE ITS Society Outstanding Research Award, and the IEEE ITS Lifetime Achievement Award. He is a Fellow of IEEE, a member of the National Academy of Engineering, and a Fellow of the American Academy of Arts and Sciences.
\end{IEEEbiography}

\end{document}